\theoremstyle{plain}
\newtheorem{theorem}{Theorem}[section]
\newtheorem{corollary}[theorem]{Corollary}
\newtheorem{proposition}[theorem]{Proposition}
 \newtheorem{lemma}[theorem]{Lemma}
\theoremstyle{definition}
\newtheorem{remark}[theorem]{Remark}
\newtheorem{assumption}[theorem]{Assumption}
 \newtheorem{example}[theorem]{Example}
 \newtheorem{definition}[theorem]{Definition}
\newtheorem*{proposition*}{Proposition}
\newtheorem*{definition*}{Definition}
\numberwithin{equation}{section}
\theoremstyle{plain}
\newtheorem*{theorem*}{Theorem}
\newtheorem*{hypo*}{Hypothesis}
\newenvironment{abc}{\begin{enumerate}[{\rm (a)}]}{\end{enumerate}}
\newenvironment{num}{\begin{enumerate}[{\rm 1.}]}{\end{enumerate}}
\newenvironment{iiv}{\begin{enumerate}[{\rm (i)}]}{\end{enumerate}}
\def\dom{\mathrm{D}}
\def\semis{\mathscr{P}}
\def\dd{\mathrm{d}}
\def\res{\mathrm{R}}
\def\ee{\mathrm{e}}
\def\RR{\mathbb{R}}
\def\NN{\mathbb{N}} 
\def\LLL{\mathscr{L}}
\def\Ell{\mathrm{L}}
\def\BC{\mathrm{C}_{\mathrm{b}}}
\def\BC{\mathrm{C}_{\mathrm{b}}}
\def\tlim{\mathop{\tau\lim}}
\def\R{\mathcal{R}}
\def\xX{\underline{X}}
\def\TT{\underline{T}}
\def\aA{\underline{A}}
\def\CC{\mathbb{C}}
\newenvironment{proofof}[1]{\par
	\pushQED{\qed}%
	\normalfont \topsep6\p@\@plus6\p@\relax
	\trivlist
	\item[\hskip\labelsep
				\bfseries
		Proof of #1\@addpunct{.}]\ignorespaces		
}{%
	\popQED\endtrivlist\@endpefalse
}
\begin{document}
\title{Positive Desch--Schappacher perturbations of bi-continuous semigroups {on $\mathrm{AM}$-spaces}} 

\author{Christian Budde}
\address{North-West University, School of Mathematical and Statistical Sciences, Private Bag X6001-209, Potchefstroom 2520, South Africa}
\email{christian.budde@nwu.ac.za}

\begin{abstract}                                                                         
In this paper, we consider positive Desch--Schappacher perturbations of bi-continuous semigroups on $\mathrm{AM}$-spaces with an additional property concerning the additional locally convex topology. As an examples, we discuss perturbations of the left-translation semigroup on the space of bounded continuous function on the real line and on the space of bounded linear operators.
\end{abstract}

\keywords{bi-continuous semigroups, positivity, Desch--Schappacher perturbation, Gamma function}
\subjclass[2010]{47D03, 47A55, 34G10, 46A70, 46A40}

\date{}
\maketitle

\section*{Introduction}
Dynamical processes occurring, e.g., in population models, quantum mechanics, or the financial world, are frequently expressed by a particular class of partial differential equations, the so-called evolution equations. A general operator theoretical method for dealing with those equations is the one using abstract Cauchy problems on a Banach space $X$, i.e., one rewrites the partial differential equation as follows
\[
\begin{cases}\tag{ACP}\label{eqn:ACP}
\dot{u}(t)=Au(t),&\quad t\geq0,\\
u(0)=x\in X,
\end{cases}
\]
for some (unbounded) linear operator $(A,\dom(A))$ on $X$. This approach was worked out in detail by several authors, e.g., Engel and Nagel \cite{EN}, Pazy \cite{P1983} and Goldstein \cite{G2017}, just to mention a few. Classical solutions of \eqref{eqn:ACP} in the sense of \cite[Chapter II, Def.~6.1]{EN}, if they exist, can be represented by strongly continuous one-parameter semigroups of linear operators on $X$, or $C_0$-semigroups for short. By the Hille--Yosida theorem \cite[Chapter II, Thm.~3.8]{EN} one has such a classical solution if and only if $(A,\dom(A))$ is the generator of such an operator semigroup. However, this theorem consists of technical conditions that are difficult to verify for a concrete operator. In some cases, it is possible to write the given operator $(A,\dom(A))$ as a sum of simpler operators and this is where perturbation theory enters the area of evolution equations. The general question is: given a generator $(A,\dom(A))$ and another linear operator $(B,\dom(B))$, under which conditions does the operator $A+B$ generate a semigroup? Perturbations results for positive $C_0$-semigroups arouse a great deal of interest in the past, see for example \cite{PertPosAppl2006,Desch,ABE2014,ArendtRhandi1991,Voigt1977}.

\medskip
When talking about one-parameter semigroups of linear operators on Banach spaces, mostly $C_0$-semigroups come to mind. Nevertheless, there are operator semigroups which are not strongly continuous with respect to the norm on the Banach space but for some weaker additional locally convex topology on $X$. This is one of the reasons why people are interested in different continuity concepts of semigroups and more general solutions in order to overcome these limitations of strongly continuous semigroups. There were different attempts: integrable semigroups of Kunze \cite{K2009}, "C-class" semigroups of Kraaij \cite{Kr2016}, $\pi$-semigroups of Priola \cite{P1999} and weakly continuous semigroups of Cerrai \cite{C1994}, to mention a few. These kind of operator semigroups concentrate predominantly on the space $\mathrm{C}_{\mathrm{b}}(E)$ of bounded continuous functions on a metric space $(E,d)$ and the space $\mathrm{C}_{\mathrm{ub}}(\mathcal{H})$ of bounded uniformly continuous functions on a Hilbert space $\mathcal{H}$ which occur naturally in the context of transition semigroups. The general idea is to consider operator semigroups with respect to a locally convex topology that is weaker than the norm topology. Nonetheless, these semigroups do not cover every important example of non-strongly continuous semigroups on Banach spaces in a uniform manner, e.g., adjoint semigroups \cite{vN1992}, implemented semigroups \cite{Alber2001}, semigroups induced by flows, the Ornstein--Uhlenbeck semigroup or Markov processes, just to mention a few (see for example \cite{DN1996,DPL1995,vN1992,LB2007} or \cite[Chapter 3]{KuPhD}). One of the auspicious approaches to this gives rise to so-called bi-continuous semigroups, which were introduced by K\"uhnemund \cite{KuPhD,Ku}. The idea is to equip the underlying Banach space with an additional locally convex topology $\tau$ according to some requirements on the interaction with the norm topology. The perturbation theory of such semigroups was developed by Farkas \cite{FaPhD,FaSF,FaStud} and more recently by Farkas and the author \cite{BF,BF2}, see also \cite{BuPosMiy,BuddePhD}. That one takes also positivity of the semigroup into account is for example motivated by real-life applications where most of the quantities one works with have to be positive in order to make sense in the specific context, e.g., total temperature or density. A first perturbation result for positive bi-continuous semigroups has recently been proved by the author \cite{BuPosMiy} by means of positive Miyadera--Voigt perturbations, whereas the general notion of positivity in combination with bi-continuous semigroups makes its first appearance in \cite{ESF2005}.

\medskip
In this paper, we discuss positive Desch--Schappacher perturbations of bi-continuous semigroups. {In order to talk about positivity, the underlying Banach space needs more structure, i.e., one needs an ordering which allows comparing elements of the Banach space. For this purpose, we will consider a special class of Banach lattices, the so-called $\mathrm{AM}$-spaces. They have been extensively studied by Kakutani \cite{K1941} who showed that every $\mathrm{AM}$-space has a concrete representation as a subspace of the space $\mathrm{C}(\Omega)$ of all continuous real valued functions defined on a topological compact Hausdorff space $\Omega$ by means of lattice isomorphisms. Furthermore, we will see that the $\mathrm{AM}$-spaces offer a particularly rewarding setting for our theory. Desch--Schappacher type perturbations} for bi-continuous semigroups have been discussed by the author and Farkas in \cite{BF2}. However, positivity in the context of bi-continuous Desch--Schappacher perturbation was not considered yet. Next to the pure theoretical interest of such a theorem, the usage of such a result is motivated by applications in the theory of PDEs. To be more specific, if one considers Markov processes coming from stochastic differential equations, cf. \cite[Sect.~2.4 \& 2.5]{LB2007}, one is interested in positive solutions of the corresponding parabolic problem associated with differential operators, on the space of bounded continuous functions $\BC(\RR^n)$. Moreover, this space is of special interest since operator semigroups associated with Markov processes, in general, do not leave the space $\mathrm{C}_0(\RR^n)$ invariant, cf. \cite[Sect.~5.2 \& 5.3]{LB2007}. Furthermore, Desch--Schappacher type perturbations are of particular interest since there is a wide range of application, e.g., domain and boundary perturbations by Greiner \cite{Greiner1987}, Nickel \cite{Nickel2004} or Hadd, Manzo and Rhandi \cite{Rhandi2014}, boundary control by Engel, Kramar Fijav\v{z}, Kl\"{o}ss, Nagel and Sikolya \cite{EKKNS2010} and Engel and Kramar Fijav\v{z} \cite{EK2017} or control theory by Jacob, Nabiullin, Partington and Schwenninger \cite{JNPS2018,JNPS2016} and Jacob, Schwenninger and Zwart \cite{JSZ2018}, just to mention a few here. 

\medskip
As main reference serves the work of A.~Batk\'{a}i, B.~Jacob, J.~Wintermayr and J.~Voigt \cite{BJVW2018} on positive Desch--Schappacher perturbations of strongly continuous operator semigroups. Nevertheless, the techniques used in this paper differ from \cite{BJVW2018}. Firstly, one has to take the additional locally convex topology into account. Secondly, the construction of the extrapolation spaces for bi-continuous semigroups, which are essential for Desch--Schappacher type perturbations, is significantly more complicated than in the case of strongly continuous semigroups \cite{BF}. Moreover, one can cover a spectrum of applications beyond the $C_0$-semigroup setting. In particular, we will consider the translation semigroup on the space of bounded continuous functions $\BC(\RR)$ as well as the implemented semigroup on the space of bounded linear operators $\LLL(E,F)$ between two Banach spaces $E$ and $F$. The latter semigroup appears naturally in the context of Wigner symmetries in quantum mechanics, see for example \cite[Chapter 5]{Landsman2017} or \cite[Chapter 3, Section 2]{bratteli2013operator}.

\medskip
The first two sections have a preliminary character. Firstly, we introduce the concept of bi-continuous semigroups and recall some facts about ordering in Banach spaces. In the second section, we combine extrapolation spaces and positivity. Section 3 consists of the main perturbation result as well as its proof. In the final section, we consider examples of perturbations on the spaces $\BC(\RR)$ and $\LLL(E,F)$.

\section{Preliminaries}
\subsection{Bi-continuous semigroups}
The theory of bi-continuous semigroups was first introduced by K\"uhnemund \cite{KuPhD,Ku}. K\"uhnemund suggested the following assumptions, which are related to Saks spaces, cf. \cite{C1987}.

\begin{assumption}\label{asp:bicontspace} 
Consider a triple $(X,\|\cdot\|,\tau)$ where $\left(X,\|\cdot\|\right)$ is a Banach space, and
\begin{num}
\item $\tau$ is a locally convex Hausdorff topology coarser than the norm-topology on $X$, i.e., the identity map $(X,\|\cdot\|)\to(X,\tau)$ is continuous;
\item $\tau$ is sequentially complete on the $\left\|\cdot\right\|$-closed unit ball, i.e., every $\left\|\cdot\right\|$-bounded $\tau$-Cauchy sequence is $\tau$-convergent;
\item The dual space of $(X,\tau)$ is norming for $X$, i.e.,
\begin{equation}\label{eq:norm}
\|x\|=\sup_{\substack{\varphi\in(X,\tau)'\\\|\varphi\|\leq1}}{|\varphi(x)|},\quad x\in X.\end{equation}
\end{num}
\end{assumption}

\begin{remark}\label{rem:examAssump}
Every locally convex topology $\tau$ is induced by a family of continuous seminorms $\semis$ and vice versa, cf. \cite[Chapter II, Sect.~4]{Schaefer1971} or \cite[Thm.~1.36 \& 1.37]{Rudin}. In particular, if we want to make explicit calculations within the framework of locally convex topologies, we use the corresponding seminorms.
\end{remark}

\begin{example}\label{exa:BiContSpaces}
\begin{abc}
	\item The space $\BC(\RR)$ of bounded continuous functions on $\RR$ equipped with the compact-open topology is a typical example of satisfying the previous assumptions. 
	\item The space $\LLL(E,F)$ of bounded linear operators between Banach spaces $E$ and $F$ together with the strong operator topology satisfies Assumption \ref{asp:bicontspace}.
	\item Likewise, the essentially bounded functions $\Ell^{\infty}(\RR)$ with the weak$^*$-topology, as it is the dual space of $\mathrm{L}^1(\RR)$, belongs to the list of examples in this context. 
	\item More general, the dual space $X'$ of a Banach space $X$ equipped with the weak$^*$-topology satisfies Assumption \ref{asp:bicontspace}.	For more details and examples we refer to \cite[Chapter 1]{BuddePhD}.
\end{abc}
\end{example}

Now we introduce bi-continuous semigroups as it was done by K\"uhnemund.

\begin{definition}\label{def:bicontsemi}
Let $X$ be a Banach space with norm $\|\cdot\|$ together with a locally convex topology $\tau$ such that the conditions in Assumption \ref{asp:bicontspace} are satisfied. We call $(T(t))_{t\geq0}$ a \emph{$\tau$-bi-continuous semigroup} if
\begin{num}
\item $ T(t+s)=T(t)T(s)$ and $T(0)=I$ for all $s,t\geq 0$,
\item $(T(t))_{t\geq0}$ is strongly $\tau$-continuous, i.e. the map $\varphi_x:[0,\infty)\to(X,\tau)$ defined by $\varphi_x(t)=T(t)x$ is $\tau$-continuous for every $x\in X$,
\item $(T(t))_{t\geq0}$ has type $(M,\omega)$ for some $M\geq 1$ and $\omega\in \RR$, i.e., $\left\|T(t)\right\|\leq M\ee^{\omega t}$ for all $t\geq0$,
\item $(T(t))_{t\geq0}$ is locally-bi-equicontinuous, i.e., if $(x_n)_{n\in\NN}$ is a norm-bounded sequence in $X$ which is $\tau$-convergent to $0$, then also $(T(s)x_n)_{n\in\NN}$ is $\tau$-convergent to $0$ uniformly for $s\in[0,t_0]$ for each fixed $t_0\geq0$.
\end{num}
\end{definition}

Similarly to the case of $C_0$-semigroups, one defines the generator of a bi-continuous semigroup as follows.

\begin{definition}\label{def:BiGen}
Let $(T(t))_{t\geq0}$ be a $\tau$-bi-continuous semigroup on $X$. The \emph{(infinitesimal) generator} of $(T(t))_{t\geq0}$ is the linear operator $(A,\dom(A))$ defined by
\[Ax:=\tlim_{t\to0}{\frac{T(t)x-x}{t}}\] with domain 
\[\dom(A):=\Bigl\{x\in X:\ \tlim_{t\to0}{\frac{T(t)x-x}{t}}\ \text{exists and} \ \sup_{t\in(0,1]}{\frac{\|T(t)x-x\|}{t}}<\infty\Bigr\}.\]
\end{definition}

As a matter of fact, the generator of a bi-continuous semigroup behaves almost like a $C_0$-semigroup generator. Since we do not need all of the properties here, we refer for more details to \cite[Prop.~1.16 \& Prop.~1.18]{KuPhD}. Furthermore, there exists a Hille--Yosida generation type theorem for bi-continuous semigroups \cite[Thm.~16]{Ku}.

\subsection{Ordered topological vector spaces}
We now recall some basic definitions of ordered topological vector spaces and operators acting on such spaces, see \cite{Schaefer1974} or \cite{GvR2016} for further background material. First of all, we recall the notion of vector and Banach lattices. Notice that for our purpose only real vector spaces are of interest.

\begin{definition}
A \emph{vector lattice} or \emph{Riesz space} $(V,\leq)$ is a vector space $V$ equipped with a partial order $\leq$ such that for each $x,y,z\in V$:
\begin{abc}
	\item $x\leq y \Rightarrow x+z\leq y+z$.
	\item $x\leq y \Rightarrow \alpha x\leq\alpha y$ for all scalars $\alpha\geq0$.
	\item For any pair $x,y\in V$ there exists a supremum, denoted by $x\vee y$, and an infimum, denoted by $x\wedge y$, in $V$ with respect to the partial order $\leq$.
\end{abc}
An element $x\in V$ is called \emph{positive} if $x\geq0$. The set of all positive elements of $V$ is denoted by $V_+$. Furthermore, the \emph{absolute value} of an element $x\in X$ is defined by $\left|x\right|:=x\vee(-x)$. {Furthermore, $x_+:=x\vee0$ and $x_-:=x\wedge 0$ are called the \emph{positive part} and \emph{negative part}, respectively.}
\end{definition}

\begin{definition}
A \emph{Banach lattice} is a triple $(X,\left\|\cdot\right\|,\leq)$ where $(X,\left\|\cdot\right\|)$ is a Banach space equipped with a partial order $\leq$ such that $(X,\leq)$ is a Riesz space satisfying $\left|x\right|\leq\left|y\right| \Rightarrow \left\|x\right\|\leq\left\|y\right\|$ for all $x,y\in X$.
\end{definition}

The following definition also incorporates the locally convex topology $\tau$ which belongs into the framework of bi-continuous semigroups. The concept is related to the work of Kawai \cite[Thm.~1]{K1957}.

\begin{definition}\label{def:comp}
Let $(X,\left\|\cdot\right\|,\leq)$ and $\tau$ a locally convex topology satisfying Assumption \ref{asp:bicontspace} generated by a family of seminorms $\semis$. We say that $\tau$ is \emph{compatible (with the Banach lattice structure)} if for all $p\in\mathscr{P}$ and all $x,y\in X$ one has that $p\left(x\right)\leq p\left(y\right)$ whenever $\left|x\right|\leq\left|y\right|$.
\end{definition}

Next we introduce a special class of $\mathrm{AM}$-spaces which are related to the $\mathrm{AM}$-spaces used by Batkai et al. \cite[Rem.~2.5]{BJVW2018}. In fact, one needs an additional requirement on the interaction of the partial order and the seminorms.

\begin{definition}
Let $(X,\left\|\cdot\right\|,\leq)$ be a Banach lattice and $\tau$ a locally convex topology satisfying Assumption \ref{asp:bicontspace}. Let $\semis$ be the family of seminorms generating $\tau$. Then $X$ is called a \emph{bi-$\mathrm{AM}$-space} if $\sup\left\{\left\|x\right\|,\left\|y\right\|\right\}=\left\|x\vee y\right\|$ and $\sup\left\{p(x),p(y)\right\}=p\left(x\vee y\right)$ for all $x,y\in X_+$ and $p\in\semis$.
\end{definition}

Now we take also linear operators on such lattices into consideration. Let $(X,\left\|\cdot\right\|,\leq)$ be a Banach lattice and $T:X\to X$ a bounded linear operator. Then $T$ is called \emph{positive}, denoted by $T\geq0$, if $Tx\geq0$ for each $x\in X_+$. A semigroup of bounded linear operators $(T(t))_{t\geq0}$ on such a Banach lattice is called \emph{positive} if $T(t)\geq0$ for each $t\geq0$. 

\medskip
In order to make sense of the following notion, recall that for a linear operator $(A,\dom(A))$ on a real Banach space $X$ the \emph{resolvent set} $\rho(A)$ consists of $\lambda\in\mathbb{C}$ such that $\lambda-A$ is invertible on the complexification $X_\mathbb{C}$ of $X$ defined by $X_\mathbb{C}=X\otimes_{\RR}\mathbb{C}$, i.e., there exists a bounded operator $B$ with range equal to $\dom(A)$ such that $(\lambda-A)Bx=x$ for each $x\in X_\CC$ and $B(\lambda-A)x=x$ for each $x\in\dom(A)$. The complement of $\rho(A)$, is called the \emph{spectrum} and is denoted by $\sigma(A):=\mathbb{C}\setminus\rho(A)$. The so-called \emph{spectral radius} of an operator $(A,\dom(A))$ is defined by $\mathrm{r}(A):=\sup\left\{\left|\lambda\right|:\ \lambda\in\sigma(A)\right\}$. Also, recall that the \emph{spectral bound} $s(A)$ of an operator $(A,\dom(A))$ is defined by $s(A):=\sup\left\{\mathrm{Re}(\lambda):\ \lambda\in\sigma(A)\right\}$. The inverse of $\lambda-A$, called the \emph{resolvent}, is often denoted by $R(\lambda,A):=(\lambda-A)^{-1}$. If $(A,\dom(A))$ is the generator of a bi-continuous semigroup $(T(t))_{t\geq0}$, then it was shown that the resolvent can be expressed as a Laplace transform, cf. \cite[Sect.~1.2]{Ku} or \cite[Thm.~1.2.7]{FaPhD}. To be exact, for $\lambda>\omega_0$ one has $\lambda\in\rho(A)$ and
\[
R(\lambda,A)x=\int_0^{\infty}{\ee^{-\lambda t}T(t)x\ \dd{t}},
\]
for all $x\in X$, where the integral is a $\tau$-improper integral and $\omega_0$ is the growth bound defined to be the infimum over $\omega\in\RR$ appearing in the exponential norm estimate in Definition \ref{def:bicontsemi}(3), cf. \cite[Sect.~1.1]{KuPhD} or \cite[Sect.~1.2]{FaPhD}. The following definition related to the connection between positivity and unbounded operators was suggested by W.~Arendt \cite{Arendt1987}.

\begin{definition}
A linear operator $(A,\dom(A))$ on a Banach lattice $(X,\left\|\cdot\right\|,\leq)$ is called \emph{resolvent positive} if there exists $\omega\in\RR$ such that $(\omega,\infty)\subseteq\rho(A)$ and such that $R(\lambda,A)\geq0$ for each $\lambda>\omega$.
\end{definition}

By \cite[Cor.~2.3]{Arendt1987} a $C_0$-semigroups on a Banach lattice is positive if and only if the corresponding generator $(A,\dom(A))$ is resolvent positive. The same result holds true for the case of bi-continuous semigroups, cf. \cite[Thm.~1.4.1]{FaPhD}.

\section{Extrapolation spaces and positivity}

We first recall some results concerning extrapolation spaces for bi-continuous semigroups from \cite{BF} and \cite{BF2}. Throughout this section we assume without loss of generality that $0\in\rho(A)$. One of the most important ingredients for the construction of extrapolation spaces is the following proposition, cf. \cite[Prop.~1.18]{KuPhD} and \cite[Chapter II, Cor.~3.21]{EN}

\begin{proposition}\label{prop:StrCont}
Let $(T(t))_{t\geq0}$ be a bi-continuous semigroup on $X$ with generator $(A,\dom(A))$. The subspace $\xX_0:=\overline{\dom(A)}^{\left\|\cdot\right\|}\subseteq X$ is $(T(t))_{t\geq0}$-invariant and $(\TT(t))_{t\geq0}:=(T(t)_{|\xX_0})_{t\geq0}$ is the $C_0$-semigroup on $\xX_0$ generated by the part of $A$ in $\xX_0$ (this generator is denoted by $\aA_0$). 
\end{proposition}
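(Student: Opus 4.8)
The plan is to establish the three assertions of Proposition~\ref{prop:StrCont} in turn, following the classical blueprint for $C_0$-semigroups (as in \cite[Chapter II, Cor.~3.21]{EN}) but paying attention to the fact that $(T(t))_{t\geq0}$ is only $\tau$-continuous, not norm-continuous. First I would prove that $\xX_0$ is $(T(t))_{t\geq0}$-invariant: since each $T(t)$ is norm-bounded, it suffices to show $T(t)\dom(A)\subseteq\dom(A)$ and then pass to the $\|\cdot\|$-closure. For $x\in\dom(A)$ one has the standard commutation $T(t)Ax = AT(t)x$ together with the fact that $t\mapsto T(t)x$ is differentiable in the bi-continuous sense with derivative $T(t)Ax$; concretely, $T(h)T(t)x - T(t)x = T(t)(T(h)x-x)$, so dividing by $h$, using that $T(t)$ is norm-bounded for the $\sup$-condition in Definition~\ref{def:BiGen}, and using local-bi-equicontinuity (Definition~\ref{def:bicontsemi}(4)) to push the $\tau$-limit through $T(t)$, we get $T(t)x\in\dom(A)$ with $AT(t)x = T(t)Ax$. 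Hence $T(t)\xX_0\subseteq\xX_0$.

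Next I would verify that $(\TT(t))_{t\geq0}$ is a $C_0$-semigroup on the Banach space $(\xX_0,\|\cdot\|)$. The semigroup law and the exponential norm bound are inherited trivially by restriction. For strong norm-continuity, the key point is that on $\dom(A)$ one has the norm estimate $\|T(t)x - x\| = \|\int_0^t T(s)Ax\,\dd s\| \leq M\ee^{\omega t}t\|Ax\|\to 0$ as $t\to 0^+$ (this uses that for $x\in\dom(A)$ the orbit is norm-Lipschitz near $0$, which is exactly what the $\sup$-condition in the definition of $\dom(A)$ encodes, combined with the semigroup law to get it on all of $[0,t_0]$). Then a standard $3\varepsilon$-argument together with the uniform bound $\|T(t)\|\leq M\ee^{\omega t}$ on compact time intervals extends norm-continuity from the dense subspace $\dom(A)$ to all of $\xX_0 = \overline{\dom(A)}^{\|\cdot\|}$. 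This gives that $(\TT(t))_{t\geq0}$ is a $C_0$-semigroup.

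Finally I would identify its generator with $\aA_0$, the part of $A$ in $\xX_0$, defined by $\aA_0 x = Ax$ on $\dom(\aA_0) = \{x\in\dom(A)\cap\xX_0 : Ax\in\xX_0\}$. Let $B$ denote the $C_0$-generator of $(\TT(t))_{t\geq0}$. For $x\in\dom(\aA_0)$, the $\tau$-limit defining $Ax$ is in fact a norm-limit (since $x\in\dom(\aA_0)$ and the orbit is norm-differentiable there, using the $\sup$-bound again), so $x\in\dom(B)$ with $Bx = Ax = \aA_0 x$; thus $\aA_0\subseteq B$. For the reverse inclusion, take $\lambda>\omega_0$ so that $\lambda\in\rho(A)$ and, by the $C_0$-theory, $\lambda\in\rho(B)$; since $R(\lambda,A)$ maps $X$ into $\dom(A)\subseteq\xX_0$ and commutes with the $T(t)$, its restriction to $\xX_0$ is a two-sided inverse of $\lambda - \aA_0$, so $\lambda\in\rho(\aA_0)$ as well. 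Two generators with a common point in their resolvent sets, one contained in the other, must coincide: $R(\lambda,B) = R(\lambda,\aA_0)$ forces $\dom(B) = \dom(\aA_0)$.

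The main obstacle I anticipate is the passage from $\tau$-differentiability to norm-differentiability of the orbits on $\dom(A)$, i.e.\ making rigorous that $T(t)x - x = \int_0^t T(s)Ax\,\dd s$ as a \emph{norm}-valued (or at least $\tau$-improper with the right norm estimates) identity; this is where the $\sup$-condition in Definition~\ref{def:BiGen} and local-bi-equicontinuity both get used in an essential way, and it is precisely the place where the bi-continuous setting departs from the $C_0$-case. Everything else — the $3\varepsilon$ density argument and the resolvent identification — is routine once this integral representation and the resulting norm-Lipschitz estimate $\|T(t)x-x\|\leq M\ee^{\omega t} t\|Ax\|$ are in hand. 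Since these facts about bi-continuous semigroups are already available in \cite[Prop.~1.16 \& 1.18]{KuPhD}, in the write-up I would simply cite them and keep the proof short.
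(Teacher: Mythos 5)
Your argument is correct and is essentially the standard one that the paper itself defers to by citation (\cite[Prop.~1.16 \& 1.18]{KuPhD} and \cite[Chapter II, Cor.~3.21]{EN}): invariance of $\dom(A)$ via the commutation relation and local bi-equicontinuity, norm-continuity at $0$ from the Lipschitz condition built into $\dom(A)$ plus a density argument, and identification of the generator through the common resolvent. No gaps; the paper gives no proof of its own beyond these references, so citing them as you propose is exactly what the author does.
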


This construction, elementary properties and examples can be found in \cite[Chapter II, Sect.~5a]{EN}, \cite{NagelIdent} or \cite{N1997}. Without loss of generality, we may assume that $0\in\rho(A)$. Recall from \cite[Chapter II, Def.~5.4]{EN} that one obtains $\xX_{-1}$ as the completion of $\xX_0$ with respect to the $\left\|\cdot\right\|_{-1}$-norm defined by
\[
\left\|x\right\|_{-1}:=\left\|\aA_0^{-1}x\right\|,\quad \text{for }x\in\xX_0.
\]
Notice that $\xX_0$ is dense in $\xX_{-1}$ and that $(\TT(t))_{t\geq0}$ extends by continuity to a $C_0$-semigroup $(\TT_{-1}(t))_{t\geq0}$ on $\xX_{-1}$ with generator $(\aA_{-1},\dom(\aA_{-1}))$, where $\dom(\aA_{-1})=\xX_0$. By repeating this construction one obtains the following chain of spaces
\[
\xX_0\stackrel{\aA_{-1}}{\hookrightarrow}\xX_{-1}\stackrel{\aA_{-2}}{\hookrightarrow}\xX_{-2}\rightarrow\cdots
\]
where all inclusions are continuous and dense. Moreover, we have
\[
\xX_0\hookrightarrow X\hookrightarrow\xX_{-1},
\]
see \cite{BF}, so that we can identify $X$, by the continuity of the inclusions, as a subspace of $\xX_{-1}$. This leads to the following definition of extrapolation spaces for bi-continuous semigroups, cf. \cite[Prop.~5.8]{BF}.

\begin{definition}
The \emph{(first) extrapolation space} $X_{-1}$ for a bi-continuous semigroup $(T(t))_{t\geq0}$ with generator $(A,\dom(A))$ is defined by
\begin{align*}
X_{-1}:=\aA_{-2}(X).
\end{align*}
The norm on $X_{-1}$ is defined by $\|x\|_{-1}:=\|\aA^{-1}_{-2}x\|$, the locally convex topology $\tau_{-1}$ on $X_{-1}$ is induced by the family of seminorms $\semis_{-1}:=\left\{p_{-1}:\ p\in\semis\right\}$ where
\[
p_{-1}(x):=p(\aA_{-2}^{-1}x),\quad p\in\semis,\:x\in X_{-1}.
\]
\end{definition}

\begin{remark}\label{rem:EquivExtraSp}
As mentioned previously, we assume without loss of generality that $0\in\rho(A)$, in particular, $A^{-1}$ exists and is a bounded linear operator. However, one can also use any other operator $R(\lambda,A)$, $\lambda\in\rho(A)$, since the constructed spaces are all equivalent.
\end{remark}

It was shown in \cite[Prop.~5.8]{BF} that $(T(t))_{t\geq0}$ extends to a $\tau_{-1}$-bi-continuous semigroup $(T_{-1}(t))_{t\geq0}$ on $X_{-1}$ and has a generator $A_{-1}$ with domain $\dom(A_{-1})=X$. The operator $A_{-1}:X\to X_{-1}$ is an isomorphism intertwining the semigroups $(T(t))_{t\geq0}$ and $(T_{-1}(t))_{t\geq0}$.

\medskip
Before we equip the extrapolation space with a lattice structure, we need the concept of the so-called \emph{mixed topology}. The mixed topology has been introduced by Wiweger \cite{W1961} for general topological vector spaces. However, we will restrict us to the simpler situation where we consider the $\left\|\cdot\right\|$-topology and the additional locally convex topology $\tau$ satisfying Assumption \ref{asp:bicontspace}. We construct the mixed topology by following \cite[Thm.~3.1.1]{W1961} and \cite[Sect.~A.1]{FaPhD}. For $p_n\in\semis$ and $(a_n)_{n\in\NN}\in\mathrm{c}_0$, $a_n\geq0$, $n\in\NN$, one defines the seminorms on $X$ by
\[
\widetilde{p}_{(a_n,p_n)}(x):=\sup_{n\in\NN}{a_np_n(x)},\quad x\in X.
\]

\begin{definition}
The \emph{mixed topology} associated to a triple $(X,\left\|\cdot\right\|,\tau)$ satisfying Assumption \ref{asp:bicontspace} is the locally convex topology induced by the family of seminorms 
\begin{align}\label{eqn:SemiMix}
\widetilde{\semis}:=\left\{\widetilde{p}_{(a_n,p_n)}:\ p_n\in\semis,\ (a_n)_{n\in\NN}\in\mathrm{c}_0,\ a_n\geq0\right\}
\end{align}
and will be denoted by $\gamma:=\gamma(\tau,\left\|\cdot\right\|)$.
\end{definition}

It is obvious, that $\tau\subseteq\gamma\subseteq\left\|\cdot\right\|$. Moreover, by \cite[Lemma~A.1.2]{FaPhD} a sequence $(x_n)_{n\in\NN}$ in $X$ converges to $x$ with respect to $\gamma$ if and only if the sequence in $\left\|\cdot\right\|$-bounded and $\tau$-convergent to $x$. This actually implies the following result, cf. \cite[Prop.~A.1.3]{FaPhD}.

\begin{proposition}
The class of bi-continuous semigroups coincides with the class of $\gamma$-strongly continuous and locally sequentially $\gamma$-equicontinuous semigroups.
\end{proposition}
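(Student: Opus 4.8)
The plan is to unwind both definitions and reduce the whole equivalence to two facts already at our disposal: the inclusions $\tau\subseteq\gamma\subseteq\left\|\cdot\right\|$, and the sequential description of $\gamma$-convergence recalled above, namely that a sequence converges in $\gamma$ exactly when it is $\left\|\cdot\right\|$-bounded and $\tau$-convergent. Both notions in the statement presuppose the algebraic semigroup law and the exponential bound $\left\|T(t)\right\|\leq M\ee^{\omega t}$ of Definition~\ref{def:bicontsemi}(3), so only the continuity data---conditions $(2)$ and $(4)$ on the one side, $\gamma$-strong continuity and local sequential $\gamma$-equicontinuity on the other---need to be matched. I would also record at the outset the normalisation, implicit in formula~\eqref{eqn:SemiMix}, that every $p\in\semis$ may be assumed to satisfy $p\leq\left\|\cdot\right\|$: since $\tau$ is coarser than the norm topology, each $p\in\semis$ is dominated by a multiple of the norm, and without rescaling to $p\leq\left\|\cdot\right\|$ the supremum $\sup_{n}a_np_n(x)$ need not even be finite.

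For the implication ``bi-continuous $\Rightarrow$ $\gamma$-strongly continuous and locally sequentially $\gamma$-equicontinuous'', I would argue as follows. Because $[0,\infty)$ is metrizable it suffices to verify $\gamma$-continuity of $t\mapsto T(t)x$ along sequences; so I take $s_n\to t$, note that $\left(T(s_n)x\right)_n$ is $\left\|\cdot\right\|$-bounded by $(3)$ and $\tau$-convergent to $T(t)x$ by $(2)$, and conclude $T(s_n)x\to T(t)x$ in $\gamma$ from the sequential characterisation of $\gamma$-convergence. For local sequential $\gamma$-equicontinuity, fix $t_0\geq0$ and a $\left\|\cdot\right\|$-bounded sequence $(x_m)_m$ with $x_m\to0$ in $\tau$; then $C:=\sup_m\sup_{s\in[0,t_0]}\left\|T(s)x_m\right\|<\infty$ by $(3)$, while $\sup_{s\in[0,t_0]}p(T(s)x_m)\to0$ as $m\to\infty$ for every $p\in\semis$ by $(4)$. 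Given $\widetilde{p}_{(a_n,p_n)}$ and $\varepsilon>0$, I would choose $N$ with $a_nC<\varepsilon$ for $n>N$, so that $a_np_n(T(s)x_m)\leq a_n\left\|T(s)x_m\right\|<\varepsilon$ for all $s\in[0,t_0]$, all $m$ and all $n>N$, whereas $\max_{n\leq N}\sup_{s\in[0,t_0]}a_np_n(T(s)x_m)\to0$ as $m\to\infty$; combining the two, $\sup_{s\in[0,t_0]}\widetilde{p}_{(a_n,p_n)}(T(s)x_m)\leq\varepsilon$ for $m$ large, which is the asserted uniform $\gamma$-convergence.

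For the converse I would use only the inclusion $\tau\subseteq\gamma$. If $(T(t))_{t\geq0}$ is $\gamma$-strongly continuous, then each $t\mapsto T(t)x$ is in particular $\tau$-continuous, which is Definition~\ref{def:bicontsemi}(2). If moreover the semigroup is locally sequentially $\gamma$-equicontinuous, I take a $\left\|\cdot\right\|$-bounded sequence $(x_m)_m$ with $x_m\to0$ in $\tau$; by the sequential characterisation $x_m\to0$ in $\gamma$, so $T(s)x_m\to0$ in $\gamma$---and hence in $\tau$, again because $\tau\subseteq\gamma$---uniformly for $s\in[0,t_0]$, which is Definition~\ref{def:bicontsemi}(4). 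Since the exponential type estimate $(3)$ is shared by both classes, this completes the equivalence.

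The only step requiring genuine care is the passage from $\tau$ to $\gamma$ inside the equicontinuity statement of the forward implication: the mixed seminorms $\widetilde{p}_{(a_n,p_n)}$ carry a supremum over $n$, and one must see that the tail $n>N$ is absorbed uniformly in $m$ and $s$. This is precisely where the normalisation $p\leq\left\|\cdot\right\|$, the uniform norm bound furnished by $(3)$, and $(a_n)_n\in\mathrm{c}_0$ conspire, reducing matters to a finite maximum over $n\leq N$ to which the ordinary local bi-equicontinuity of $(T(t))_{t\geq0}$ applies; everything else is a direct reading-off of the definitions through the chain $\tau\subseteq\gamma\subseteq\left\|\cdot\right\|$.
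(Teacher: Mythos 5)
Your argument is correct and takes essentially the route the paper intends: the paper offers no proof of its own, deducing the proposition directly from the sequential characterisation of $\gamma$-convergence (\cite[Lemma~A.1.2]{FaPhD}), and your write-up is the natural unwinding of that reduction, including the one point that genuinely requires an argument, namely the uniform absorption of the tail of the mixed seminorms $\sup_n a_np_n$ via the $\mathrm{c}_0$-weights, the normalisation $p\leq\left\|\cdot\right\|$, and the exponential norm bound. Nothing is missing.
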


Now, we are able to introduce a lattice structure on extrapolation spaces. As the lattice structure of $X$ is not naturally hereditary to the extrapolation spaces one needs a new concept. For $C_0$-semigroups, this was introduced by Batkai et al. \cite[Def.~2.1]{BJVW2018} and \cite[Def.~2.1]{W2016}. Following this model, one could think about defining $x\in X_{-1}$ to be positive if and only if there exists a sequence $(x_n)_{n\in\NN}$ in $X_+$ such that $(x_n)_{n\in\NN}$ is $\left\|\cdot\right\|_{-1}$-bounded and $x_n\stackrel{\tau_{-1}}{\rightarrow}x$. However, by \cite[Lemma~A.1.2]{FaPhD} this would coincide with the $\gamma$-sequential closure of $X_+$. In general, the sequential closure is not sequentially closed, see for example \cite[Rem.~1(ii)]{W2014}. As we introduced the mixed topology, we denote by $\gamma_{-1}:=\gamma(\tau_{-1},\left\|\cdot\right\|_{-1})$ the mixed topology on the extrapolation space $X_{-1}$. 
In what follows, we also need that the considered semigroup is not only sequentially $\gamma$-locally equicontinuous but $\gamma$-locally equicontinuous. Therefore, we will make the following standing assumptions.

\begin{assumption}\label{ass:CompMixed}
The space $(X,\gamma)$ is complete and $(T(t))_{t\geq0}$ is $\gamma$-locally equicontinuous, i.e., for some $t_0>0$ (or equivalently for all $t_0>0$) holds that
\[
\forall p\in\widetilde{\semis}\ \exists q\in\widetilde{\semis}\ \exists M\geq0\ \forall x\in X\ \forall t\in\left[0,t_0\right]: p(T(t)x)\leq Mq(x). 
\]
\end{assumption}

\begin{remark}
\begin{abc}
	\item The use of Assumption \ref{ass:CompMixed} is twofold. First of all, due to the completeness of $(X,\gamma)$, one is able to extrapolate the mixed topology $\gamma$ and in the view of \cite[Thm.~1 \& Rem.~1(i)]{W2014} one obtains that $\gamma_{-1}$ coincides with the extrapolated locally convex topology of $\gamma$. Secondly, one does not struggle with sequential closures since, as mentioned already above, in general, the sequential closure is not sequentially closed.
	\item Of course, Assumption \ref{ass:CompMixed} is somehow restrictive, nevertheless, it is still reasonable, see for example \cite{GK2001} and \cite{Kr2016}. In particular, Farkas showed that bi-continuous semigroups on the space $\BC(\Omega)$, for $\Omega$ a Polish space, are automatically local and hence $\gamma$-locally equicontinuous, cf. \cite[Prop.~3.3 \& Thm.~3.4]{FCz2011}.
\end{abc}
\end{remark}

\begin{definition}\label{def:PosConeExtra}
Let $(X,\left\|\cdot\right\|,\leq)$ be a Banach lattice with positive cone $X_+$ and compatible locally convex topology $\tau$. Let $X_{-1}$ be the extrapolation space corresponding to the positive bi-continuous semigroup $(T(t))_{t\geq0}$. {We say that $x\in X_{-1}$ is \emph{positive}, if $x$ belongs to the $\gamma_{-1}$-closure of $X_+$ in $X_{-1}$. By $X_{-1,+}$ we denote all positive elements of $X_{-1}$.}
\end{definition}

\begin{proposition}\label{prop:PosCone}
Let $(X,\left\|\cdot\right\|,\leq)$ be a Banach lattice and $\tau$ a compatible locally convex topology and let $(T(t))_{t\geq0}$ be a positive bi-continuous semigroup on $X$ with generator $(A,\dom(A))$. The set $X_{-1,+}$ is a $\gamma_{-1}$-closed convex cone in $X_{-1}$ satisfying $X_+=X_{-1,+}\cap X$.
\end{proposition}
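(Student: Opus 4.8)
The plan is to verify each assertion in turn: that $X_{-1,+}$ is a cone, that it is convex, that it is $\gamma_{-1}$-closed, and finally the identity $X_+ = X_{-1,+}\cap X$.

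First I would observe that $X_{-1,+}$ is $\gamma_{-1}$-closed essentially by definition: it is \emph{the} $\gamma_{-1}$-closure of $X_+$ in $X_{-1}$, and a closure is always closed. (Here it is important that we work with the full topological closure, not just the sequential closure; Assumption \ref{ass:CompMixed} is precisely what lets us work with $\gamma_{-1}$ as the extrapolated locally convex topology of $\gamma$, so the closure is a genuine topological closure.) For the cone and convexity properties, I would use the fact that $X_+$ itself is a convex cone in $X$, together with the continuity of addition and of scalar multiplication by a fixed nonnegative scalar in the locally convex space $(X_{-1},\gamma_{-1})$. Concretely: if $x,y\in X_{-1,+}$, then $x+y$ lies in the closure of $X_+ + X_+ \subseteq X_+$, because addition $X_{-1}\times X_{-1}\to X_{-1}$ is $\gamma_{-1}$-continuous and maps $\overline{X_+}\times\overline{X_+}$ into $\overline{X_+ + X_+}$; similarly $\lambda x\in X_{-1,+}$ for $\lambda\geq0$ since multiplication by $\lambda$ is $\gamma_{-1}$-continuous and preserves $X_+$. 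The closure of a convex set is convex, so convexity is immediate once we know $X_+$ is convex. This takes care of the first sentence of the statement.

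The substantive part is the identity $X_+ = X_{-1,+}\cap X$. The inclusion $X_+ \subseteq X_{-1,+}\cap X$ is clear: $X_+\subseteq X$, and $X_+$ is contained in its own $\gamma_{-1}$-closure. The reverse inclusion $X_{-1,+}\cap X \subseteq X_+$ is where the work lies, and I expect this to be the main obstacle. One needs to show that if $x\in X$ lies in the $\gamma_{-1}$-closure of $X_+$, then already $x\in X_+$, i.e.\ that $X_+$ is $\gamma_{-1}$-\emph{relatively} closed in $X$. The natural strategy is to compare the topology $\gamma_{-1}$ restricted to $X$ with a topology in which $X_+$ is manifestly closed. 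Since $(X,\|\cdot\|,\leq)$ is a Banach lattice, $X_+$ is $\|\cdot\|$-closed; but $\gamma_{-1}$ restricted to $X$ is weaker than $\|\cdot\|$, so closedness in norm is not enough by itself. Instead I would exploit the bi-$\mathrm{AM}$ structure and compatibility: on $X$, the trace of $\tau_{-1}$ is related to $\tau$ via the isomorphism $A_{-1}\colon X\to X_{-1}$ (with $p_{-1}(A_{-1}x)=p(x)$ for $p\in\semis$), and $X_+$ is $\tau$-closed — indeed, for a compatible topology on a Banach lattice one can express positivity through the lattice operations and the seminorms, e.g.\ $x\in X_+$ iff $p(x^-)=0$ for all $p\in\semis$ (using that $\{p=0\}$ is $\tau$-closed and that $x\mapsto x^-$ is $\tau$-continuous on norm-bounded sets, which follows from compatibility). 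Then a $\gamma_{-1}$-limit in $X$ of a net in $X_+$ is in particular norm-bounded and $\tau_{-1}$-convergent, hence, transported back through $A_{-1}$, norm-bounded and $\tau$-convergent in $X$, so it lies in the $\tau$-closure of $X_+$ intersected with a norm ball, which is $X_+$.

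Finally I would assemble these pieces into the proof in the order: (1) $X_{-1,+}$ is a $\gamma_{-1}$-closed convex cone (definition plus continuity of the vector space operations); (2) $X_+\subseteq X_{-1,+}\cap X$ (trivial); (3) $X_{-1,+}\cap X\subseteq X_+$ via the argument above, reducing $\gamma_{-1}$-convergence in $X$ to norm-bounded $\tau$-convergence and using that $X_+$ is $\tau$-closed on norm-bounded sets. The delicate point to get right is step (3): one must be careful that the $\gamma_{-1}$-closure is taken in $X_{-1}$, use the description of $\gamma_{-1}$ as the mixed topology (so that $\gamma_{-1}$-convergence on $X$ means norm-boundedness in $\|\cdot\|_{-1}$ together with $\tau_{-1}$-convergence, at least along sequences or nets), and then use that $A_{-1}$ is an isometry $X\to X_{-1}$ onto its range for the $\|\cdot\|_{-1}$-norm and a homeomorphism for $\tau_{-1}$ to pull everything back to $X$, where the Banach-lattice and compatibility hypotheses finish the job.
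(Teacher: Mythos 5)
Your treatment of the first assertion has a real omission: you verify only that $X_{-1,+}$ is closed under addition and nonnegative scalar multiplication (which, together with the closedness-by-definition, does match the first lines of the paper's proof), but the substantive point --- and the bulk of the paper's argument --- is the pointedness $X_{-1,+}\cap(-X_{-1,+})=\{0\}$, without which $X_{-1,+}$ is not a cone inducing an order on $X_{-1}$. This is not automatic from taking closures. The paper proves it by taking nets $(x_\alpha)$ and $(y_\beta)$ in $X_+$ with $x_\alpha\to x$ and $y_\beta\to -x$ in $\gamma_{-1}$, using resolvent positivity of $A$ (this is where the positivity of $(T(t))_{t\geq0}$ enters) to get $0\leq R(\lambda,A)x_\alpha\leq R(\lambda,A)(x_\alpha+y_\beta)$, and then the compatibility of $\tau$ to deduce $\widetilde{p}_{-1,(a_n,p_n)}(x_\alpha)\leq \widetilde{p}_{-1,(a_n,p_n)}(x_\alpha+y_\beta)\to 0$, hence $x=0$. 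Your proposal never invokes the positivity of the semigroup at all, which should have been a warning sign, since it is a hypothesis of the proposition.

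The reverse inclusion $X_{-1,+}\cap X\subseteq X_+$ also has a gap. From $x_\alpha\to x$ in $\gamma_{-1}$ you cannot conclude that $x_\alpha\to x$ in $\tau$ (or $\gamma$) on $X$: ``transporting back through $A_{-1}$'' yields $\tau$-convergence of $A_{-1}^{-1}x_\alpha$ to $A_{-1}^{-1}x$, not of $x_\alpha$ to $x$ --- the trace of $\tau_{-1}$ on $X$ is genuinely coarser than $\tau$, which is the whole point of extrapolation. (Moreover, $\gamma_{-1}$-convergence of a \emph{net} does not give $\left\|\cdot\right\|_{-1}$-boundedness; that characterization of the mixed topology holds only for sequences.) The paper instead argues in two steps: $R(\lambda,A)x_\alpha\to R(\lambda,A)x$ with respect to $\gamma$, and since $R(\lambda,A)x_\alpha\geq 0$ by resolvent positivity and $X_+$ is $\gamma$-closed in $X$, one gets $R(\lambda,A)x\geq 0$; then the approximation $\lambda R(\lambda,A)x\to x$ in $\gamma$ as $\lambda\to\infty$ forces $x\in X_+$. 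Both the use of resolvent positivity and this Abel-type limit are absent from your sketch, and without them the reduction to ``$X_+$ is $\tau$-closed on norm-bounded sets'' does not close the argument. Incidentally, the bi-$\mathrm{AM}$ structure you appeal to is not a hypothesis of this proposition; only compatibility of $\tau$ is assumed here.
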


\begin{proof}
By taking the $\gamma_{-1}$-closure in the inclusions $X_++X_+\subseteq X_+$ and $\alpha X_+\subseteq X_+$ for $\alpha\geq0$ we obtain the corresponding inclusions for $X_{-1,+}$. The $\gamma_{-1}$-closedness of $X_{-1,+}$ follows directly from the construction. To conclude that $X_{-1,+}$ is a cone, we have to show that $X_{-1,+}\cap\left(-X_{-1,+}\right)=\left\{0\right\}$. To do so, assume that $x\in X_{-1,+}$ and $-x\in X_{-1,+}$. By definition, there exist nets $(x_\alpha)_{\alpha\in I}$ and $(y_\beta)_{\beta\in J}$ in $X_+$ such that $x_\alpha\stackrel{\gamma_{-1}}{\rightarrow}x$ and $y_\beta\stackrel{\gamma_{-1}}{\rightarrow}-x$. Obviously, we have $0\leq x_\alpha\leq x_\alpha+y_\beta$ for all $\alpha\in I$ and $\beta\in J$ and therefore {one gets $0\leq\res(\lambda,A)x_\alpha\leq\res(\lambda,A)(x_\alpha+y_\beta)$ for all for all $\alpha\in I$ and $\beta\in J$, whenever $\lambda>\omega$ for $\omega\in\RR$ such that $(\omega,\infty)\subseteq\rho(A)$, due to the fact that $(A,\dom(A))$ is the generator of a positive bi-continuous semigroup, cf. \cite[Thm.~1.4.1]{FaPhD}}. Whence by the compatibility of $\tau$ one gets
\[
\widetilde{p}_{-1,(a_n,p_n)}(x_\alpha)={p}_{(a_n,p_n)}(\res(\lambda,A)x_\alpha)\leq p_{(a_n,p_n)}(\res(\lambda,A)(x_\alpha+y_\beta))=\widetilde{p}_{-1,(a_n,p_n)}(x_\alpha+y_\beta)\to 0,
\]
for all $(a_n)_{n\in\NN}\in\mathrm{c}_0$ with $a_n\geq0$ and $p_n\in\semis$, see also \eqref{eqn:SemiMix}. From this we conclude that $x_\alpha\stackrel{\gamma_{-1}}{\rightarrow}0$ and hence $x=0$, showing that $X_{-1,+}\cap\left(-X_{-1,+}\right)=\left\{0\right\}$.
Finally, we show that the equality $X_+=X_{-1,+}\cap X$ holds. The inclusion $X_+\subseteq X_{-1,+}\cap X$ is obvious by definition, so we only have to show that the reverse inclusion holds. To do so, let $x\in X_{-1,+}\cap X$, i.e., there exists $(x_\alpha)_{\alpha\in I}$ in $X_+$ such that $x_\alpha\stackrel{\gamma_{-1}}{\rightarrow}x$. For $\lambda\in\rho(A)$ we obtain that $\res(\lambda,A)x_\alpha\stackrel{\gamma}{\to}\res(\lambda,A)x$. Since $\res(\lambda,A)x_\alpha\geq0$ for each $\alpha\in I$ and we conclude by the $\gamma_{-1}$-closedness that $\res(\lambda,A)x\geq0$. Since by \cite[Lemma~2.1(i)]{O1973} or \cite[Lemma~6.3]{Kr2016} also holds that $\lambda\res(\lambda,A)x\to x$ with respect to $\gamma$ for $\lambda\to\infty$ we conclude that $x\in X_+$.
\end{proof}

\section{Positive Desch--Schappacher Perturbations}

In this section we present and prove the main result of this paper, which is the following theorem. {The following definition is important, cf. \cite[Def.~1.2.20]{FaPhD}.

\begin{definition}
Let $T$ be a norm-bounded operator on a Banach space $X$. Suppose that $\semis$ is a family of seminorms generating the local convex topology $\tau$. We call $T$ \emph{local} if for all $p\in\semis$ and $\varepsilon>0$ there exist $K>0$ and $q\in\semis$ such that for all $x\in X$ one has
\[
p(Tx)\leq Kq(x)+\varepsilon\left\|x\right\|.
\]
\end{definition}
}
 
Before we formulate and prove our main result, we derive some preliminary results, starting with the following in the style of \cite[Lemma~4.3]{BJVW2018}.

\begin{lemma}\label{lemma:StepFunctions}
Let $\left(X,\left\|\cdot\right\|\right)$ be a Banach space with ordering $\leq$ and an additional locally convex topology $\tau$. Let $X_{-1}$ be the extrapolation space for the positive bi-continuous semigroup $(T(t))_{t\geq0}$ on $X$. Let $B\in\LLL(X,X_{-1})$ such that $B:\left(X,\tau\right)\to\left(X_{-1},\tau_{-1}\right)$ is continuous and positive. Then for all $t_0>0$ we have:
\begin{iiv}
	\item $(T_{-1}(t))_{t\geq0}$ is positive on $X_{-1}$.
	\item For each step function $u\in\Ell^{\infty}\left(\left[0,t_0\right],X\right)$ we have
	\[
	\int_0^{t_0}{T_{-1}(s)Bu(s)\ \dd{s}}\in X.
	\]
	\item For all $x\in X_+$ one has
	\[
	\int_0^{t_0}{T_{-1}(s)Bx\ \dd{s}}\in X_+.
	\]
	\item If $(T(t))_{t\geq0}$ is exponential stable{, i.e., there exists $M\geq1$ and $\omega>0$ such that $\left\|T(t)\right\|\leq M\ee^{-\omega t}$ for all $t\geq0$,} then 
	\[
	\int_0^{t_0}{T_{-1}(s)Bx\ \dd{s}}\leq\int_0^{\infty}{T_{-1}(s)Bx\ \dd{s}}
	\]
	in $X$ for all $x\in X_+$.
\end{iiv}
\end{lemma}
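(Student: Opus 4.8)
For part (i), the plan is to use the characterization of positivity via resolvent positivity: since $(T(t))_{t\ge0}$ is a positive bi-continuous semigroup, its generator $A$ is resolvent positive, and $R(\lambda,A)$ extends to $R(\lambda,A_{-1})$ on $X_{-1}$ which remains positive (it maps the generating set $X_+$ into $X_+\subseteq X_{-1,+}$, and $R(\lambda,A_{-1})$ is $\gamma_{-1}$-continuous, so it maps the $\gamma_{-1}$-closure $X_{-1,+}$ into itself). Then $(T_{-1}(t))_{t\ge0}$ is the bi-continuous semigroup generated by $A_{-1}$, and by the bi-continuous analogue of \cite[Cor.~2.3]{Arendt1987} (i.e.\ \cite[Thm.~1.4.1]{FaPhD}) resolvent positivity of $A_{-1}$ is equivalent to positivity of $(T_{-1}(t))_{t\ge0}$. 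Alternatively, and perhaps more directly, since $T_{-1}(t)x_n\to T_{-1}(t)x$ in $\gamma_{-1}$ whenever $x_n\to x$ in $\gamma_{-1}$ and $(T_{-1}(t))$ restricts to $(T(t))$ on $X$ which is positive, positivity propagates to the $\gamma_{-1}$-closure $X_{-1,+}$.

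For part (iii): fix $x\in X_+$. By part (i), $T_{-1}(s)Bx\in X_{-1,+}$ for every $s\ge0$ (using that $B$ is positive, so $Bx\in X_{-1,+}$, and $T_{-1}(s)$ preserves $X_{-1,+}$). The integral $\int_0^{t_0}T_{-1}(s)Bx\,\dd s$ is a $\tau_{-1}$-improper (Riemann) integral, hence a $\tau_{-1}$-limit of Riemann sums $\sum_j T_{-1}(s_j)Bx\,\Delta_j$; each such sum is a positive linear combination of elements of $X_{-1,+}$, hence lies in the convex cone $X_{-1,+}$, which is $\gamma_{-1}$-closed by Proposition~\ref{prop:PosCone} — but I need $\tau_{-1}$-closedness of the cone for this argument, which requires a uniform norm bound on the Riemann sums; this is where exponential/type estimates on $(T_{-1}(t))$ together with $\gamma_{-1}=\gamma(\tau_{-1},\|\cdot\|_{-1})$ let me pass from $\tau_{-1}$-convergence to $\gamma_{-1}$-convergence of a norm-bounded net. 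Once the integral is known to be in $X_{-1,+}$, part (ii) applied to the step function $x\mathbf 1_{[0,t_0]}$ (constant, hence a step function, with values in $X$) shows it actually lies in $X$; then $X_{-1,+}\cap X=X_+$ from Proposition~\ref{prop:PosCone} gives membership in $X_+$. So (iii) is essentially (i) $+$ (ii) $+$ the closed-cone argument.

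For part (ii), which I expect to be the main obstacle: one must show that for a step function $u=\sum_{k=1}^n x_k\mathbf 1_{[t_{k-1},t_k)}$ with $x_k\in X$, the integral $\int_0^{t_0}T_{-1}(s)Bu(s)\,\dd s=\sum_k\int_{t_{k-1}}^{t_k}T_{-1}(s)Bx_k\,\dd s$ lands back in $X$ rather than merely in $X_{-1}$. By translation and the semigroup law it suffices to treat $\int_0^r T_{-1}(s)Bx\,\dd s$ for $x\in X$, $r\le t_0$. The key point is the standard extrapolation identity: $A_{-1}^{-1}\int_0^r T_{-1}(s)y\,\dd s=\int_0^r T(s)A_{-1}^{-1}y\,\dd s$ (or the variant $\int_0^r T_{-1}(s)y\,\dd s = A_{-1}\!\int_0^r\! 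T(s)A^{-1}y\,\dd s$, or $=(T_{-1}(r)-I)A_{-1}^{-1}y$ — whichever form is available), which expresses the integral as $A_{-1}$ applied to something in $\D(A_{-1})=X$, hence as an element of $X$. One must be careful that these manipulations are justified for the $\tau_{-1}$-improper integral and that $B$ maps into $X_{-1}$ (not further down the scale), using the hypothesis $B\in\LLL(X,X_{-1})$ and its $\tau$-$\tau_{-1}$-continuity to make sense of $T_{-1}(s)Bx$ and its integrability. I would model this step closely on \cite[Lemma~4.3]{BJVW2018}, adapting the norm estimates to $\tau_{-1}$-estimates via the local-equicontinuity Assumption~\ref{ass:CompMixed}; the obstacle is purely in checking that the integral-vs-generator interchange is valid in the bi-continuous / mixed-topology setting rather than any conceptual difficulty.

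For part (iv): assume $\|T(t)\|\le M\ee^{-\omega t}$ with $\omega>0$, so $\int_0^\infty T_{-1}(s)Bx\,\dd s$ converges (the corresponding resolvent is bounded). For $x\in X_+$, write $\int_0^\infty-\int_0^{t_0}=\int_{t_0}^\infty T_{-1}(s)Bx\,\dd s = T_{-1}(t_0)\int_0^\infty T_{-1}(s)Bx\,\dd s$ by the semigroup property and a change of variables. By parts (i) and (iii) (applied on $[0,\infty)$ after the same closed-cone argument, or by viewing $\int_0^\infty$ as a $\gamma_{-1}$-limit of the positive elements $\int_0^{t_0}$), $\int_0^\infty T_{-1}(s)Bx\,\dd s\in X_+$, and $T_{-1}(t_0)=T(t_0)$ on $X$ is positive; hence the difference $\int_0^\infty-\int_0^{t_0}$ is in $X_+$, i.e.\ $\int_0^{t_0}T_{-1}(s)Bx\,\dd s\le\int_0^\infty T_{-1}(s)Bx\,\dd s$ in $X$, as claimed. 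The only subtlety is to record that the tail integral is genuinely in $X$ (again via part (ii)-type reasoning or the resolvent representation $R(0,A_{-1})$) so that the inequality is an inequality in the lattice $X$ and not just in $X_{-1}$.
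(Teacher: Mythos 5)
Your proposal is correct and follows essentially the same route as the paper: (i) via the fact that $T_{-1}(t)$ is the continuous extension of the positive operator $T(t)$ and hence preserves the $\gamma_{-1}$-closure $X_{-1,+}$; (ii) by splitting the step function, translating via the semigroup law, and invoking the standard fact that $\int_0^r T_{-1}(s)y\,\dd s\in\dom(A_{-1})=X$; and (iii), (iv) from the $\gamma_{-1}$-closed-cone properties and $X_+=X_{-1,+}\cap X$ of Proposition~\ref{prop:PosCone}. You supply more detail than the paper does for (iii) and (iv) (the paper simply cites Proposition~\ref{prop:PosCone}), but the underlying argument is the same.
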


\begin{proof}
\begin{iiv}
	\item This follows immediately, since $(T_{-1}(t))_{t\geq0}$ is the continuous extension of $(T(t))_{t\geq0}$, for all $t\geq0$.	
	\item Let $u\in\Ell^{\infty}\left(\left[0,t_0\right],X\right)$ be a step function, i.e., there exist pairwise disjoint intervals $I_1,\ldots,I_N$ and $x_1,\ldots,x_N\in X$ such that 
	\[
	u(t)=\sum_{n=1}^N{x_n\textbf{1}_{I_n}(t)},\quad t\in\left[0,t_0\right],
	\]
	where $\textbf{1}_{I_k}$, $1\leq k\leq N$, denotes the characteristic function of $I_k$. Since the integral is additive we just need to show that for an interval $I_n:=\left[t_n,t_{n+1}\right]$ holds that
	\[
	\int_{I_n}{T_{-1}(s)Bx_n\ \dd{s}}=\int_{t_n}^{t_{n+1}}{T_{-1}(s)Bx_n\ \dd{s}}\in X.
	\]
	By using substitution we obtain that
	\[
	\int_{t_n}^{t_{n+1}}{T_{-1}(s)Bx_n\ \dd{s}}=\int_0^{t_{n+1}-t_n}{T_{-1}(s+t_n)Bx_n\ \dd{s}}=T_{-1}(t_n)\int_0^{t_{n+1}-t_n}{T_{-1}(s)Bx_n\ \dd{s}}.
	\]
	Since $(T_{-1}(t))_{t\geq0}$ is bi-continuous on $X_{-1}$ with generator $A_{-1}$ we obtain by \cite[Thm.~1.2.7(c)]{FaPhD} that
	\[
	\int_0^{t_{n+1}-t_n}{T_{-1}(s)Bu(s)\ \dd{s}}\in\dom(A_{-1})=X
	\]
	Part $\mathrm{(iii)}$ and $\mathrm{(iv)}$ directly follow from Proposition \ref{prop:PosCone}.
	\end{iiv}
\end{proof}

The following theorem is the main result and uses arguments similar to \cite[Prop.~4.2]{BJVW2018}. 

\begin{theorem}\label{prop:PosPertAux}
Let $\left(X,\left\|\cdot\right\|,\tau\right)$ be a bi-$\mathrm{AM}$-space, $(T(t))_{t\geq0}$ a positive bi-continuous semigroup on $X$ with generator $(A,\dom(A))$. Let $B\in\LLL(X,X_{-1})$ such that $B:(X,\tau)\to(X_{-1},\tau_{-1})$ is continuous. Suppose that $B$ is positive and that there exists a $\lambda>\mathrm{s}(A)$ such that $\res(\lambda,A_{-1})B$ is local and that $K:=\left\|\res(\lambda,A_{-1})B\right\|<1$. Then $(A_{-1}+B)_{|X}$ is the generator of a positive bi-continuous semigroup $(S(t))_{t\geq0}$.
\end{theorem}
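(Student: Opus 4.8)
The strategy is the classical Desch--Schappacher fixed-point argument, adapted to the bi-continuous/mixed-topology setting and then combined with positivity. First I would work on the extrapolated space $X_{-1}$ and define, for a suitable small $t_0>0$, the abstract Volterra operator $(V_{t_0}u)(t):=\int_0^t T_{-1}(t-s)Bu(s)\,\dd{s}$ acting on functions $u\in\Ell^\infty([0,t_0],X)$. Using Lemma~\ref{lemma:StepFunctions}(ii) (for step functions, then by density and the local property of $\res(\lambda,A_{-1})B$ together with Assumption~\ref{ass:CompMixed}) one checks that $V_{t_0}$ actually maps into $\Ell^\infty([0,t_0],X)$ and, more importantly, is a strict contraction there: the bound $K=\|\res(\lambda,A_{-1})B\|<1$ is exactly what is needed, after rewriting $\int_0^t T_{-1}(t-s)Bu(s)\,\dd{s}=(\lambda-A_{-1})\int_0^t T_{-1}(t-s)\res(\lambda,A_{-1})Bu(s)\,\dd{s}$ and estimating with the $\gamma_{-1}$-local equicontinuity of $(T_{-1}(t))_{t\geq0}$. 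Hence $\mathrm{I}-V_{t_0}$ is invertible, and for each $x\in X$ the integrated equation $u(t)=T(t)x+\int_0^t T_{-1}(t-s)Bu(s)\,\dd{s}$ has a unique solution $u_x\in\Ell^\infty([0,t_0],X)$; one then sets $S(t)x:=u_x(t)$ for $t\in[0,t_0]$ and extends by the functional equation to all $t\geq0$. That $(S(t))_{t\geq0}$ is a semigroup, is bi-continuous (local equicontinuity passes through the Neumann series since each term is built from $(T_{-1}(t))_{t\geq0}$ and the bounded local operator $\res(\lambda,A_{-1})B$), and has generator $(A_{-1}+B)_{|X}$ is then the same computation as in \cite[Prop.~4.2]{BJVW2018} and \cite{BF2}; I would cite \cite{BF2} for the non-positive bi-continuous Desch--Schappacher theorem and only indicate the adjustments.

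The genuinely new part is \emph{positivity} of $(S(t))_{t\geq0}$, and this is where the bi-$\mathrm{AM}$-structure is used. The idea is to run the fixed point via the Neumann series $u_x=\sum_{n\geq0}V_{t_0}^n(T(\cdot)x)$ and observe that if $x\in X_+$, then $T(\cdot)x\in X_+$ pointwise (semigroup positivity), and each application of $V_{t_0}$ preserves pointwise positivity: by Lemma~\ref{lemma:StepFunctions}(iii) (together with the density/closedness argument from Proposition~\ref{prop:PosCone}) one has $\int_0^t T_{-1}(t-s)Bv(s)\,\dd{s}\in X_+$ whenever $v(s)\in X_+$ for a.e.\ $s$ and the integral lands in $X$. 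Therefore every partial sum $\sum_{n=0}^N V_{t_0}^n(T(\cdot)x)$ is pointwise in $X_+$; passing to the limit in the mixed topology $\gamma$ (in which the series converges, since $X$ with $\gamma$ is complete by Assumption~\ref{ass:CompMixed} and the tail is controlled by $K^N\to0$) and using that $X_+$ is $\gamma$-closed (it is norm-closed and $\tau\subseteq\gamma$, or directly from compatibility of $\tau$) yields $u_x(t)=S(t)x\in X_+$. Hence $S(t)\geq0$ for $t\in[0,t_0]$, and for all $t\geq0$ by the semigroup property.

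The last point to check is that the generator of this positive bi-continuous semigroup is exactly $(A_{-1}+B)_{|X}$, meaning $\dom((A_{-1}+B)_{|X})=\{x\in X: A_{-1}x+Bx\in X\}$ with the obvious action; this is identical to the identification in \cite{BF2} and does not interact with positivity. I would present it briefly. \textbf{The main obstacle} I anticipate is not positivity itself but the interplay between the \emph{two} topologies in the contraction estimate: one must verify that $V_{t_0}$ is both a $\|\cdot\|_{\Ell^\infty}$-contraction \emph{and} maps into (and is suitably continuous on) $\Ell^\infty([0,t_0],X)$ rather than only $\Ell^\infty([0,t_0],X_{-1})$ — this is precisely why locality of $\res(\lambda,A_{-1})B$ and the full (non-sequential) $\gamma$-local equicontinuity from Assumption~\ref{ass:CompMixed} are imposed, and the bi-$\mathrm{AM}$ hypothesis $\sup\{p(x),p(y)\}=p(x\vee y)$ is what lets the $\gamma_{-1}$-seminorm estimates be performed lattice-wise so that the norm bound $K<1$ is genuinely inherited at the level of all the mixed seminorms. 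Making that step clean, rather than the formal fixed-point or the positivity bookkeeping, is the crux.
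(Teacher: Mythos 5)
Your overall architecture matches the paper's: reduce to the abstract bi-continuous Desch--Schappacher theorem of \cite{BF2} (Theorem \ref{thm:admDS}), and read off positivity of $(S(t))_{t\geq0}$ from the Dyson--Phillips/Neumann series, each of whose terms is positive because $T_{-1}$ and $B$ are. That part of your proposal is sound and is exactly what the paper does.

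The genuine gap is in the step you yourself flag as the crux: the derivation of the admissibility bounds. The rewriting $\int_0^t T_{-1}(t-s)Bu(s)\,\dd{s}=(\lambda-A_{-1})\int_0^t T_{-1}(t-s)\res(\lambda,A_{-1})Bu(s)\,\dd{s}$ followed by ``estimating with $\gamma_{-1}$-local equicontinuity'' does not produce the bound $K\left\|u\right\|_\infty$: applying the unbounded operator $\lambda-A_{-1}$ to the inner integral is not controlled by the norm of that integral, and the hypothesis of the theorem is only $\left\|\res(\lambda,A_{-1})B\right\|<1$, \emph{not} an admissibility estimate on the convolution operator. The whole point of the theorem is that on a bi-$\mathrm{AM}$-space positivity substitutes for admissibility, and the mechanism is lattice-theoretic, not a resolvent identity. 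Concretely, the paper first rescales so that $(T(t))_{t\geq0}$ is exponentially stable and $0\in\rho(A)$ (a reduction you omit, but which is needed below); then, for a \emph{positive} step function $u=\sum_n x_n\mathbf{1}_{\Omega_n}$ it sets $z:=\sup_n x_n$ and dominates $u(s)\leq z$, hence
\[
0\leq\int_0^{t_0}T_{-1}(t_0-s)Bu(s)\,\dd{s}\leq\int_0^{\infty}T_{-1}(s)Bz\,\dd{s}=A_{-1}^{-1}Bz ,
\]
where exponential stability makes the improper integral converge and identifies it with $A_{-1}^{-1}Bz$. Now $\left\|A_{-1}^{-1}Bz\right\|\leq K\left\|z\right\|=K\left\|\sup_n x_n\right\|=K\sup_n\left\|x_n\right\|=K\left\|u\right\|_\infty$, the last equality being precisely the $\mathrm{AM}$-identity; the analogous computation with the seminorms $p\in\semis$ uses the bi-$\mathrm{AM}$ identity $p(\sup_n x_n)=\sup_n p(x_n)$ together with locality of $A_{-1}^{-1}B$ to get $p(\R u)\leq M\sup_t q(u(t))+\varepsilon\left\|u\right\|_\infty$. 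General step functions are then handled by splitting $u=u_+-u_-$ and using $\left|\R u\right|\leq\R\left|u\right|$, and continuous functions by approximation with norm-bounded step functions. You gesture at the $\mathrm{AM}$-identity in your closing sentence but never actually run this domination argument, and the route you do sketch for the contraction would fail. A small additional slip: your justification that $X_+$ is $\gamma$-closed ``because it is norm-closed and $\tau\subseteq\gamma$'' is backwards, since $\gamma$ is coarser than the norm topology; the correct route is via $\tau$-closedness of $X_+$ (compatibility) together with $\tau\subseteq\gamma$.
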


To prove Theorem \ref{prop:PosPertAux}, we recall the Desch--Schappacher perturbation type theorem for bi-continuous semigroups \cite[Thm.~4.1]{BF2}.

\begin{theorem}\label{thm:admDS}
{Let $(T(t))_{t\geq0}$ be a $\tau$-bi-continuous semigroup with generator $(A,\dom(A))$ on a Banach space $X$.} Let $\mathscr{P}$ be the set of generating continuous seminorms corresponding to $\tau$. Let {$B\in \LLL(X,X_{-1})$} such that $B:(X,\tau)\rightarrow(X_{-1},\tau_{-1})$ is continuous, and let $t_0>0$ be such that
\begin{abc}
	\item $\displaystyle{\int\limits_0^{t_0}{T_{-1}(t_0-r)Bf(r)\ \dd r}\in X}$ for each $f\in\BC\left(\left[0,t_0\right],(X,\tau)\right)$.
	\item For every $\varepsilon>0$ and every $p\in\semis$ there exists $q\in\semis$ and $K>0$ such that for all $f\in\BC\left(\left[0,t_0\right],(X,\tau)\right)$
	\begin{align}
	p\left(\int_0^{t_0}{T_{-1}(t_0-r)Bf(r)\ \dd r}\right)\leq K\cdot\sup_{r\in\left[0,t_0\right]}{\left|q(f(r))\right|}+\varepsilon\left\|f\right\|_{\infty}.
	\end{align}
	\item There exists $M\in\left(0,1\right)$ such that for all $f\in\BC\left(\left[0,t_0\right],(X,\tau)\right)$ 
	\begin{align}
	\left\|\int\limits_0^{t_0}{T_{-1}(t_0-r)Bf(r)\ \dd r}\right\|\leq M\left\|f\right\|_{\infty}.
	\end{align}
\end{abc}
\noindent Then the operator $(A_{-1}+B)_{|X}$ defined on the domain 
\[
\dom((A_{-1}+B)_{|X}):=\left\{x\in X:\ A_{-1}x+Bx\in X\right\}
\]
generates a $\tau$-bi-continuous semigroup.
\end{theorem}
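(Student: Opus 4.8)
The plan is to deduce generation from the abstract bi-continuous Desch--Schappacher theorem (Theorem \ref{thm:admDS}) and to establish positivity of the perturbed semigroup separately, through resolvent positivity. I would begin by reducing to the exponentially stable case. Enlarging $\lambda$ keeps $\lambda\in\rho(A)$ and only decreases $K$: for $\mu\ge\lambda$ the resolvent identity gives $0\le\res(\mu,A_{-1})\le\res(\lambda,A_{-1})$, hence $0\le\res(\mu,A_{-1})B\le\res(\lambda,A_{-1})B$, and so $\|\res(\mu,A_{-1})B\|\le K$ by monotonicity of the operator norm for positive operators on a Banach lattice. After enlarging $\lambda$ if necessary (which by the same identity also inherits locality) we may assume $\lambda>\omega_0$ and replace $(A,\dom(A))$ by $(A-\lambda,\dom(A))$, i.e. $(T(t))_{t\ge0}$ by the exponentially stable positive bi-continuous semigroup $(\ee^{-\lambda t}T(t))_{t\ge0}$. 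By Remark \ref{rem:EquivExtraSp} this leaves the extrapolation space, the operator $B$, its positivity and locality, and the number $K$ unchanged, while $\res(\lambda,A_{-1})$ becomes $\res(0,A_{-1})=\int_0^{\infty}T_{-1}(s)\,\cdot\,\dd s$; since scalar rescaling preserves positivity and bi-continuity, it suffices to treat this case. From now on $0\in\rho(A)$, the semigroup is exponentially stable, and $K=\|\res(0,A_{-1})B\|<1$.

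Next I would verify the three hypotheses of Theorem \ref{thm:admDS} for the operator $V_{t_0}f:=\int_0^{t_0}T_{-1}(t_0-r)Bf(r)\,\dd r$ and an arbitrary $t_0>0$. Condition (a) is Lemma \ref{lemma:StepFunctions}(ii) together with \cite[Thm.~1.2.7(c)]{FaPhD}: the bounded, $\tau_{-1}$-continuous $X_{-1}$-valued integrand has $\tau_{-1}$-integral in $\dom(A_{-1})=X$. The norm contraction (c) is the heart of the argument and is where the bi-$\mathrm{AM}$ structure and positivity must cooperate. Decomposing $f=f_+-f_-$ into its positive and negative parts (which are $\tau$-continuous, by compatibility of $\tau$), the integrals $V_{t_0}f_\pm$ lie in $X_+$ as in Lemma \ref{lemma:StepFunctions}(iii), since $X_{-1,+}$ is $\gamma_{-1}$-closed; hence in the lattice $X$
\[
|V_{t_0}f|\le V_{t_0}f_++V_{t_0}f_-=\int_0^{t_0}T_{-1}(s)B|f(t_0-s)|\,\dd s=:w\in X_+ .
\]
Dominating $|f(t_0-s)|\le\|f\|_{\infty}\mathbf 1$ by the order unit of the $\mathrm{AM}$-space, applying the positive operators $T_{-1}(s)B$, and invoking Lemma \ref{lemma:StepFunctions}(iv) gives
\[
0\le w\le\|f\|_{\infty}\int_0^{\infty}T_{-1}(s)B\mathbf 1\,\dd s=\|f\|_{\infty}\,\res(0,A_{-1})B\mathbf 1 ,
\]
so that, using $\|\mathbf 1\|=1$,
\[
\|V_{t_0}f\|=\bigl\||V_{t_0}f|\bigr\|\le\|w\|\le K\|f\|_{\infty},
\]
which is (c) with $M=K<1$. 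For (b) I would combine compatibility of $\tau$ (so that $p(V_{t_0}f)\le p(w)$ for every $p\in\semis$) with the hypothesis that $\res(0,A_{-1})B$ is local, transferring the local estimate from $\int_0^{\infty}T_{-1}(s)B\,\cdot\,\dd s$ to the truncated operator $V_{t_0}$ by a tail estimate, exactly as in the proof of \cite[Thm.~4.1]{BF2}.

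With (a)--(c) in hand, Theorem \ref{thm:admDS} shows that $(A_{-1}+B)_{|X}$ generates a $\tau$-bi-continuous semigroup $(S(t))_{t\ge0}$, and it remains to prove positivity, which I would do via resolvent positivity rather than a Dyson--Phillips series. Solving $(\mu-A_{-1}-B)x=y$ yields $x=(\Id-\res(\mu,A_{-1})B)^{-1}\res(\mu,A_{-1})y$, and since $\|\res(\mu,A_{-1})B\|\le K<1$ for all $\mu\ge0$ the Neumann series
\[
\res(\mu,(A_{-1}+B)_{|X})=\sum_{n=0}^{\infty}\bigl(\res(\mu,A_{-1})B\bigr)^{n}\res(\mu,A_{-1})\big|_X
\]
converges in norm for all sufficiently large $\mu$. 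As $B$ is positive and $\res(\mu,A_{-1})\colon X_{-1}\to X$ maps $X_{-1,+}$ into $X_+$ (by the net argument of Proposition \ref{prop:PosCone} using $\gamma$-closedness of $X_+$), every summand is a positive operator, and the norm-closedness of $X_+$ forces $\res(\mu,(A_{-1}+B)_{|X})\ge0$. Thus $(A_{-1}+B)_{|X}$ is resolvent positive, and by the bi-continuous analogue of Arendt's theorem \cite[Thm.~1.4.1]{FaPhD} the semigroup $(S(t))_{t\ge0}$ is positive; undoing the initial rescaling preserves both positivity and bi-continuity.

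I expect the main obstacle to be the norm contraction (c), specifically the domination step reducing the time-dependent integrand $|f(t_0-s)|$ to the constant $\|f\|_{\infty}\mathbf 1$ via the order unit of the bi-$\mathrm{AM}$-space and Lemma \ref{lemma:StepFunctions}(iv); this is exactly where the $\mathrm{AM}$-structure and positivity interact to produce the sharp constant $M=K<1$, with the transfer of locality to (b) being the secondary technical point.
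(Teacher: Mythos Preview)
Your proposal does not address the stated theorem. Theorem~\ref{thm:admDS} is the abstract Desch--Schappacher result for bi-continuous semigroups, quoted here verbatim from \cite[Thm.~4.1]{BF2} without proof; it carries no positivity or lattice hypotheses at all. What you have written is an outline for Theorem~\ref{prop:PosPertAux}, the paper's main result, which \emph{applies} Theorem~\ref{thm:admDS} as a black box. As a proof of the theorem actually stated, the proposal is therefore off target.

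Read as an attempt at Theorem~\ref{prop:PosPertAux}, your argument shares the paper's skeleton (rescale to exponential stability, verify (a)--(c) of Theorem~\ref{thm:admDS}, then establish positivity) but diverges in two places. For the key norm estimate (c) the paper works first with positive step functions $u=\sum_{n=1}^N x_n\mathbf 1_{\Omega_n}$, sets $z=\sup_n x_n$ (a \emph{finite} lattice supremum), and uses the bi-$\mathrm{AM}$ identity $\|z\|=\sup_n\|x_n\|=\|u\|_\infty$; it then passes to arbitrary step functions via $u=u_+-u_-$ and to continuous $u$ by the approximation in Remark~\ref{rem:ApproxCont} together with Assumption~\ref{asp:bicontspace}(3). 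You instead dominate $|f(t_0-s)|\le\|f\|_\infty\,\mathbf 1$ using an order unit. This is a genuine gap: the paper's definition of a bi-$\mathrm{AM}$-space does \emph{not} assume an order unit, and $\mathrm{AM}$-spaces without unit are admissible, so your domination step and hence your derivation of (c) can fail. The step-function detour is exactly what the paper uses to avoid this. The same issue undermines your treatment of (b), where the paper again relies on the finite-supremum identity $p(\sup_n x_n)=\sup_n p(x_n)$ for the seminorms rather than on an order unit; your ``tail estimate'' sketch does not provide this.

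For positivity, the paper reads it directly off the Dyson--Phillips series $S(t)=\sum_{n\ge0}S_n(t)$ produced inside the proof of Theorem~\ref{thm:admDS}, each term being positive by Lemma~\ref{lemma:StepFunctions}. Your Neumann-series argument combined with \cite[Thm.~1.4.1]{FaPhD} is a legitimate alternative route and is arguably cleaner, provided you justify that $\res(\mu,A_{-1})$ maps $X_{-1,+}$ into $X_+$ (this needs the $\gamma_{-1}$-continuity and Proposition~\ref{prop:PosCone}, not just positivity of $\res(\mu,A)$ on $X$).
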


Before we start with the proof of the Theorem \ref{prop:PosPertAux} we have to think about approximations of continuous functions with values in locally convex spaces as this is needed for the application of Theorem \ref{thm:admDS}. For that recall the following definitions, cf. \cite[Def.~2.2(ii) \& (iii)]{ER1991}.

\begin{definition}
Let $(\Omega,\Sigma,\mu)$ be a measure space and $(X,\tau)$ a locally convex space with a family $\semis$ of seminorms generating $\tau$. We say that $f:\Omega\to(X,\tau)$ is \emph{measurable in seminorms} if for each $p\in\semis$ there exists a sequence $(f^{(p)}_{n})_{n\in\NN}$ of step functions such that for $\mu$-almost everywhere $x\in\Omega$ holds that
\[
\lim_{n\to\infty}{p(f^{(p)}_{n}(x)-f(x))}=0.
\]
\end{definition}

\begin{definition}
Let $(\Omega,\Sigma,\mu)$ be a measure space and $(X,\tau)$ a locally convex space with a family $\semis$ of seminorms generating $\tau$. We say that $f:\Omega\to(X,\tau)$ is \emph{weakly measurable} if for each $\varphi\in(X,\tau)'$ the function $x\to\left\langle \varphi,f(x)\right\rangle$ is measurable.
\end{definition}

\begin{remark}\label{rem:ApproxCont}
\begin{abc}
	\item Observe that by \cite[Prop.~2.3]{B1981} the notions of measurability in seminorms and weak measurability coincide if $X$ is a locally convex Suslin space.
	\item Since for the application of Theorem \ref{thm:admDS} only continuous functions $u:\left[0,t_0\right]\to(X,\tau)$ are important and $\varphi\in(X,\tau)'$ we conclude that $\left\langle \varphi,u(\cdot)\right\rangle$ is continuous and hence measurable. Moreover, $u\left(\left[0,t_0\right]\right)$ is separable since $\left[0,t_0\right]$ is separable. 
	\item We conclude that $u$ can be approximated by a $\left\|\cdot\right\|$-bounded sequence of step functions in seminorms.
\end{abc}
\end{remark}

\begin{proofof}{Theorem \ref{prop:PosPertAux}}
First assume that $0\in\rho(A)$ and that $(T(t))_{t\geq0}$ is exponentially stable, i.e., there exists $M\geq1$ and $\omega>0$ such that $\left\|T(t)\right\|\leq M\ee^{-\omega t}$ for all $t\geq0$ (for more characterizations we refer to \cite[Chapter III, Sect.~2]{EisnerStab}). Let $\varepsilon>0$ be arbitrary and $u(t)=\sum_{n=1}^N{x_n\textbf{1}_{\Omega_n}(t)}$ be a $\left\|\cdot\right\|$-bounded positive step function. Fix $t_0>0$ and define
\[
\R u:=\int_0^{t_0}{T_{-1}(t_0-s)Bu(s)\ \dd{s}}.
\]
Obviously, one has that $\R u\in X$ and $\R u\geq0$ by Lemma \ref{lemma:StepFunctions}. We define $z:=\sup_{n}{x_n}$ and obtain:
\begin{align*}
\left\|\R u\right\|&=\left\|\int_0^{t_0}{T_{-1}(t_0-s)Bu(s)\ \dd{s}}\right\|\leq\left\|\int_0^{t_0}{T_{-1}(t_0-s)Bz\ \dd{s}}\right\|\leq\left\|\int_0^{\infty}{T_{-1}(t_0-s)Bz\ \dd{s}}\right\|\\
&=\left\|A_{-1}^{-1}Bz\right\|\leq\left\|A_{-1}^{-1}B\right\|\cdot\left\|z\right\|\leq K\left\|z\right\|=K\left\|\sup_{n}{x_n}\right\|=K\sup_{n}{\left\|x_n\right\|}=K\left\|u\right\|_{\infty}.
\end{align*}
The last equality follows since by assumption $\left(X,\left\|\cdot\right\|,\tau\right)$ is a bi-$\mathrm{AM}$-space. For $p\in\semis$ we obtain by using that the operator $A_{-1}^{-1}B$ is local and $\varepsilon>0$ is arbitrary that there exists $q\in\semis$ such that
\begin{align*}
p(\R u)&=p\left(\int_0^{t_0}{T_{-1}(t_0-s)Bu(s)\ \dd{s}}\right)\leq p\left(\int_0^{t_0}{T_{-1}(t_0-s)Bz\ \dd{s}}\right)\leq p\left(\int_0^{\infty}{T_{-1}(t_0-s)Bz\ \dd{s}}\right)\\
&=p(A_{-1}^{-1}Bz)\leq Mq(z)+\varepsilon\left\|z\right\|=Mq(z)+\varepsilon\left\|u\right\|_{\infty}=Mq(\sup_n{x_n})+\varepsilon\left\|u\right\|_{\infty}\\
&=M\cdot\sup_n{q(x_n)}+\varepsilon\left\|u\right\|_{\infty}=M\sup_{t\in\left[0,t_0\right]}{q(u(t))}+\varepsilon\left\|u\right\|_{\infty}.
\end{align*}
Observe, that the second inequality follows again from Lemma \ref{lemma:StepFunctions} and that we make use of the assumption that $\left(X,\left\|\cdot\right\|,\tau\right)$ is a bi-$\mathrm{AM}$-space. We see that the conditions of the Theorem \ref{thm:admDS} are satisfied for positive step functions. If $u$ is now an arbitrary step function, then $u$ can be split in its positive part $u_+$ and its negative part $u_-$, i.e., $u=u_+-u_-$, and we obtain $\left|\R u\right|\leq\R\left|u\right|$ meaning that $\left\|\R u\right\|\leq K\left\|u\right\|_{\infty}$ and $p(\R u)\leq M\sup_{t\in\left[0,t_0\right]}{q(u(t))}+\varepsilon\left\|u\right\|_{\infty}$. Now let $u\in\BC\left(\left[0,t_0\right],(X,\tau)\right)$ be arbitrary and find for $p\in\semis$ a sequence $(u_n)_{n\in\NN}$ of step functions approximating $u$. Since by assumption $B$ is a bounded linear operator between Banach spaces and in addition a linear and continuous map between locally convex spaces and $(T_{-1}(t))_{t\geq0}$ is a bi-continuous semigroup we conclude by means of the norming property of $X$, i.e., Assumption \ref{asp:bicontspace}(3), that $\R u_n\to\R u$, that $\int_0^{t_0}{T_{-1}(t_0-s)Bu(s)\ \dd{s}}=\R u\in X$ and
\[
\left\|\int_0^{t_0}{T_{-1}(t_0-s)Bu(s)\ \dd{s}}\right\|\leq K\left\|u\right\|_{\infty},
\] 
as well as 
\[
p\left(\int_0^{t_0}{T_{-1}(t_0-s)Bu(s)\ \dd{s}}\right)\leq M\sup_{t\in\left[0,t_0\right]}{q(u(t))}+\varepsilon\left\|u\right\|_{\infty}.
\]
This shows that we can apply Theorem \ref{thm:admDS} and conclude that $(A_{-1}+B)_{|X}$ generates a bi-continuous semigroup $(S(t))_{t\geq0}$ on $X$. By recapping the proof of Theorem \ref{thm:admDS} we see that $(S(t))_{t\geq0}$ is actually given by a Dyson--Phillips series, i.e., for all $t\geq0$ one has
\[
S(t)=\sum_{n=0}^{\infty}{S_n(t)},
\]
where $S_0(t):=T(t)$ and
\[
S_n(t)x=\int_0^t{T_{-1}(t-s)BS_{n-1}(s)x\ \dd{s}},
\]
for all $x\in X$. From this we conclude the positivity of $(S(t))_{t\geq0}$. 

\medskip
For the general case, we observe that $(A,\dom(A))$ generates a positive bi-continuous semigroup $(T(t))_{t\geq0}$ if and only if $(A-\lambda,\dom(A))$ generates a positive bi-continuous semigroup $(\ee^{-\lambda t}T(t))_{t\geq0}$ for $\lambda>\mathrm{s}(A)$. Since $\lambda\mapsto\left\|\res(\lambda,A_{-1})B\right\|$ is decreasing as a function on $(\mathrm{s}(A),\infty)$ we can find $\lambda\in\RR$ such that that $A-\lambda$ is the generator of a positive exponentially stable bi-continuous semigroup such that $\left\|\res(\lambda,A_{-1})B\right\|<1$. The first part of the current proof then implies that $(A_{-1}-\lambda+B)_{|X}$ generates a positive bi-continuous semigroup on $X$ and we are done.
\end{proofof}

\begin{remark}
\begin{abc}
	\item Notice, that it is crucial for the proof of Theorem \ref{prop:PosPertAux} that we assume that the underlying space is a bi-$\mathrm{AM}$-space and that the operator $R(\lambda,A_{-1})$ is local. Otherwise, one can not get the required estimates one needs to apply Theorem \ref{thm:admDS}.
	\item We observe that the extrapolation spaces of the original semigroup $(T(t))_{t\geq0}$ and the perturbed semigroup $(S(t))_{t\geq0}$ are the same, see also \cite[Prop.~4.2]{BJVW2018}.
\end{abc}
\end{remark}

The next result directly follows from Theorem \ref{prop:PosPertAux}.

\begin{corollary}\label{thm:MainPosDS}
Let $\left(X,\left\|\cdot\right\|,\tau\right)$ be a bi-$\mathrm{AM}$-space and $(T(t))_{t\geq0}$ a positive bi-continuous semigroup on $X$ with generator $(A,\dom(A))$. Let $B:X\to X_{-1}$ be a positive operator such that $B:(X,\tau)\to(X,\tau_{-1})$ is linear and continuous. Suppose that there exists $\lambda>s(A)$ such that $\res(\lambda,A_{-1})B$ is local and $\mathrm{r}\left(\res(\lambda,A_{-1})B\right)<1$. Then $(A_{-1}+B)_{|X}$ is the generator of a positive bi-continuous semigroup $(S(t))_{t\geq0}$.
\end{corollary}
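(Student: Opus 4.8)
The plan is to derive the corollary from Theorem~\ref{prop:PosPertAux} by perturbing $A$ in finitely many small steps, rather than adding $B$ all at once. First I would normalise exactly as in the proof of Theorem~\ref{prop:PosPertAux}: after replacing $A$ by $A-\lambda$ and (by Remark~\ref{rem:EquivExtraSp}) rebuilding the extrapolation space with the resolvent at $0$, we may assume $0\in\rho(A)$, that $(T(t))_{t\ge0}$ is exponentially stable, and that $C:=\res(0,A_{-1})B=-A_{-1}^{-1}B\in\LLL(X)$ is positive, $\tau$-continuous and satisfies $\mathrm r(C)<1$; in particular $(\Id-C)^{-1}=\sum_{k\ge0}C^{k}$ exists in $\LLL(X)$ and is positive. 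Fix $n\in\NN$ with $n>\|C\|\cdot\|(\Id-C)^{-1}\|$ and, for $j=0,1,\dots,n$, put $A^{(j)}:=(A_{-1}+\tfrac jnB)_{|X}$.

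I would then prove by induction on $j$ that $A^{(j)}$ generates a positive bi-continuous semigroup, with first extrapolation space again $X_{-1}$ and $A^{(j)}_{-1}=A_{-1}+\tfrac jnB$ (the latter two being the content of the remark following Theorem~\ref{prop:PosPertAux}); the case $j=0$ is the hypothesis. For the step $j-1\to j$ I apply Theorem~\ref{prop:PosPertAux} to the generator $A^{(j-1)}$ with the positive perturbation $\tfrac1nB$ and parameter $0$. The factorisation $0-A^{(j-1)}_{-1}=(0-A_{-1})(\Id-\tfrac{j-1}{n}C)$ together with $\mathrm r(\tfrac{j-1}{n}C)<1$ gives $0\in\rho(A^{(j-1)})$ and $\res(0,A^{(j-1)}_{-1})\tfrac1nB=\tfrac1n(\Id-\tfrac{j-1}{n}C)^{-1}C=\tfrac1nC(\Id-\tfrac{j-1}{n}C)^{-1}$. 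Moreover $0>\mathrm s(A^{(j-1)})$: for $\mathrm{Re}\,\zeta\ge0$ positivity of $(T(t))_{t\ge0}$ yields $|\tfrac{j-1}{n}\res(\zeta,A_{-1})Bx|\le\tfrac{j-1}{n}C|x|$ (using $|\res(\zeta,A_{-1})y|\le\res(\mathrm{Re}\,\zeta,A_{-1})|y|$, $|By|\le B|y|$ and monotonicity of $\res(\cdot,A_{-1})$ on $(\mathrm s(A),\infty)$), whence $\mathrm r(\tfrac{j-1}{n}\res(\zeta,A_{-1})B)\le\tfrac{j-1}{n}\mathrm r(C)<1$ and $\zeta\in\rho(A^{(j-1)})$. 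Finally $0\le(\Id-\tfrac{j-1}{n}C)^{-1}\le(\Id-C)^{-1}$ by termwise domination of the Neumann series, so by monotonicity of the (bi-$\mathrm{AM}$) norm $\bigl\|\tfrac1nC(\Id-\tfrac{j-1}{n}C)^{-1}\bigr\|\le\tfrac1n\|C\|\cdot\|(\Id-C)^{-1}\|<1$ by the choice of $n$. After $n$ steps $A^{(n)}=(A_{-1}+B)_{|X}$ generates a positive bi-continuous semigroup, and undoing the initial shift finishes the proof.

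The step needing the most care — and the main obstacle — is checking that $\tfrac1nC(\Id-\tfrac{j-1}{n}C)^{-1}$ is \emph{local} in the sense of the definition preceding Lemma~\ref{lemma:StepFunctions}, which is one of the hypotheses of Theorem~\ref{prop:PosPertAux}. Here one uses that $C$ is $\tau$-continuous (a consequence of rebuilding the extrapolation topology with the matching resolvent, so that $\res(0,A_{-1})=-A_{-1}^{-1}$ is a homeomorphism $(X_{-1},\tau_{-1})\to(X,\tau)$), that the Neumann series $(\Id-\tfrac{j-1}{n}C)^{-1}=\sum_k(\tfrac{j-1}{n})^{k}C^{k}$ is local — its tail is controlled in norm since $\|C\|/n<1$, while the finitely many remaining terms give a finite combination of seminorms of $\semis$ — and that the composition of a local operator with a $\tau$-continuous one is again local. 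One should also double-check that the positivity-based domination principle for spectral radii and norms of positive operators, the monotonicity of the bi-$\mathrm{AM}$-norm, and Assumption~\ref{ass:CompMixed} (ensuring $\tau_{-1}$ behaves correctly at each step) are genuinely applicable to the $X$- and $X_{-1}$-valued data above.
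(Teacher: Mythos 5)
Your proposal is correct and follows essentially the same route as the paper: both prove the corollary by splitting $B$ into $n$ copies of $\frac{1}{n}B$ and applying Theorem~\ref{prop:PosPertAux} successively to the intermediate operators $\left(A_{-1}+\frac{j}{n}B\right)_{|X}$, using positivity of the relevant Neumann series together with the lattice monotonicity of the norm to obtain $\left\|\res\left(\lambda,A_{-1}+\frac{j}{n}B\right)\frac{1}{n}B\right\|<1$ at every step. You additionally spell out the verification that $\res\left(\lambda,A_{-1}+\frac{j}{n}B\right)\frac{1}{n}B$ is local and that $\lambda$ lies above the spectral bound of each intermediate generator --- hypotheses of Theorem~\ref{prop:PosPertAux} that the paper's proof leaves implicit.
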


\begin{proof}
Since by assumption $\mathrm{r}\left(\res(\lambda,A_{-1})B\right)<1$ we obtain that $\lambda\in\rho(A_{-1}+B)$ and $\res(\lambda,A_{-1}+B)\geq0$, i.e., $A_{-1}+B$ is a resolvent positive operator. In addition, for all $s\in\left(0,1\right)$, one gets that $\lambda\in\rho(A_{-1}+sB)$ and
\[
\res(\lambda,A_{-1})\leq\res(\lambda,A_{-1}+sB)\leq\res(\lambda,A_{-1}+B).
\]
Now choose $n\in\NN$ such that $\left\|\res(\lambda,A_{-1}+B)B\right\|<n$ which means that $\left\|\frac{1}{n}\res\left(\lambda,A_{-1}+\frac{j}{n}B\right)B\right\|<1$ for all $1\leq j\leq n$. Now we apply Theorem \ref{prop:PosPertAux} to the operators 
\[
A,\left(A_{-1}+\frac{1}{n}B\right)_{|X},\ldots,\left(A_{-1}+\frac{n-1}{n}B\right)_{|X},
\]
with the perturbation operator $\frac{1}{n}B$ and the proof is finished.
\end{proof}

\section{Examples}
\subsection{The translation semigroup on $\BC(\RR)$}
First of all we discuss an example on the space of bounded continuous function $\BC(\RR)$ where we apply our theory from the previous section. In particular, we consider the following initial value problem for the partial differential equation on $\BC(\RR)$ given by
\begin{align*}\tag{PDE}\label{eqn:PDE}
\begin{cases}
\displaystyle{\frac{\partial}{\partial{t}}w(t,x)}=\displaystyle{\frac{\partial}{\partial{x}}w(t,x)+\int_\RR{w(t,\xi)\ \dd\mu(\xi)}\cdot\chi_{\left[1,\infty\right)}(x)},&\quad t\geq0,\ x\in\RR\\
w(0,x)=u_0(x),&\quad t\in\RR
\end{cases}
\end{align*}
where $\mu$ is a regular bounded Borel measure in $\RR$ satisfying $\left|\mu\right|(\RR)<1$. To continue, we rewrite \eqref{eqn:PDE} as an abstract Cauchy problem on $\BC(\RR)$. For that reason we consider the space $X:=\BC(\RR)$ with the sup-norm and the compact-open topology $\tau=\tau_{\mathrm{co}}$ as additional locally convex topology. Recall that $\tau$ is generated by the family of seminorms $\semis=\semis_{\mathrm{co}}=\left\{p_K:\ K\subseteq\RR\ \text{compact}\right\}$ where $p_K(f):=\sup_{x\in K}{\left|f(x)\right|}$.
{By Example \ref{exa:BiContSpaces}(a), the space $\BC(\RR)$ equipped with the sup-norm and the compact-open topology satisfies Assumption \ref{asp:bicontspace}. Moreover, by \cite[Thm.~7.4, Thm.~8.1 \& Cor.~8.2]{Kr2016} also Assumption \ref{ass:CompMixed} is satisfied.} As a matter of fact, $\BC(\RR)$ is a Banach lattice equipped with the ordering defined by $f\geq g$ if and only if $f(x)\geq g(x)$ for all $x\in\RR$. It is not hard to see that $\tau$ is compatible with the Banach lattice structure of $X$ and that it is a bi-$\mathrm{AM}$-space, too. Let $(T(t))_{t\geq0}$ be the left-translation semigroup on $\BC(\RR)$, i.e.,
\[
(T(t)f)(x):=f(x+t),\quad t\geq0,\ f\in\BC(\RR),\ x\in\RR,
\]
with generator $(A,\dom(A))$ given by $A:=\frac{\dd}{\dd{x}}$ and $\dom(A):=\BC^1(\RR)$. Obviously, $(T(t))_{t\geq0}$ is a positive operator semigroup. We now consider an operator $B$ which is related to the one in \cite[Sect.~4]{BF2}. {Let the function $h:\RR\to\RR$ be defined by
\[
h(x):=\begin{cases}
\ee^{x-1},&\quad x\leq1,\\
1,&\quad x>1.
\end{cases}
\]
and consider the function $g:\RR\to\RR$ defined by
\[
g(x):=\chi_{\left[1,\infty\right)}(x)=\begin{cases}
0,&\quad x\leq1,\\
1,&\quad x>1.
\end{cases}
\]
Recall from \cite[Sect.~6.1]{BF} that the first extrapolation space $X_{-1}$ corresponding the the left-translation semigroup is given by
\[
X_{-1}=\left\{F\in\mathscr{D}'(\RR):\ F=f-Df\ \text{for some}\ f\in\BC(\RR)\right\},
\]
where $\mathscr{D}'(\RR)$ denotes the space of all distributions on $\RR$ and  $D: \mathscr{D}'(\RR)\to\mathscr{D}'(\RR)$ the distributional derivative. Observe, that $g\in X_{-1}$ since $g=h-Dh$.} For the functional $\Phi:\BC(\RR)\to\CC$ defined by $\Phi(f):=\int_{\RR}{f\ \dd\mu}$ we define $B:X\to X_{-1}$ by
\[
Bf:=\Phi(f)g=\int_{\RR}{f\ \dd\mu}\cdot g,
\]
where $\mu$ is the bounded regular Borel measure on $\RR$ from above. Having this, \eqref{eqn:PDE} can be rewritten as
\[
\begin{cases}
\dot{u}(t)=Au(t)+Bu(t),&\quad t\geq0\\
u(0)=u_0.&
\end{cases}
\] 
First of all we show that $B$ is a positive operator, i.e., that $Bf\geq0$ whenever $f\geq0$. To do so, it suffices to show that there exists a sequence $(h_n)_{n\in\NN}$ of positive elements in $\BC(\RR)$ such that $h_n\stackrel{\tau_{-1}}{\rightarrow}g$. Define a sequence $(h_n)_{n\in\NN}$ of positive elements in $\BC(\RR)$ by
\[
h_n(x):=\begin{cases}
0,&\quad x\leq0,\\
x^n,&\quad 0<x\leq1,\\
1,&\quad x>1.
\end{cases}
\]
We continue by determining the expression
\[
(\res(1,A)h_n)(x)=\int_0^{\infty}{\ee^{-t}h_n(x+t)\ \dd{t}}.
\]
Due to Remark \ref{rem:EquivExtraSp} it suffices to consider $\res(1,A)$ instead of $A^{-1}$. To do so, we have to distinguish three cases. For $x\leq0$ we have
\[
\int_0^{\infty}{\ee^{-t}h_n(x+t)\ \dd{t}}=\int_{-x}^{1-x}{\ee^{-t}h_n(x+t)\ \dd{t}}+\int_{1-x}^{\infty}{\ee^{-t}\ \dd{t}}=\ee^x\left(\Gamma(n+1)-\Gamma(n+1,1)\right)+\ee^{x-1},
\]
for $0\leq x\leq1$ one gets
\[
\int_0^{\infty}{\ee^{-t}h_n(x+t)\ \dd{t}}=\int_{0}^{1-x}{\ee^{-t}(x+t)^n\ \dd{t}}+\int_{1-x}^{\infty}{\ee^{-t}\ \dd{t}}=\ee^x\left(\Gamma(n+1,x)-\Gamma(n+1,1)\right)+\ee^{x-1},
\]
whereas for $x>1$
\[
\int_0^{\infty}{\ee^{-t}h_n(x+t)\ \dd{t}}=\int_0^{\infty}{\ee^{-t}\ \dd{t}}=1.
\]
Here $\Gamma(x)$ denotes the Gamma function and $\Gamma(a,x)$ the incomplete Gamma function. For the sake of completeness, we recall the definition of the incomplete Gamma function, since this does not find much space is the literature. By definition, for $a>0$ and $x\geq0$ the incomplete Gamma function $\Gamma(a,x)$ is defined by
\[
\Gamma(a,x):=\int_x^\infty{t^{a-1}\ee^{-t}\ \dd{t}},
\]
and has been studied extensively by Jameson \cite{J2016}. As a matter of fact, by \cite[Thm.~3]{J2016} one has that for $n\in\NN$ and $x\geq0$
\[
\Gamma(n+1,x)=n!\ee^{-x}\sum_{m=0}^n{\frac{x^m}{m!}}.
\]
In particular, one has $\Gamma(n+1)-\Gamma(n+1,1)=n!-\frac{\left\lfloor \ee n!\right\rfloor}{\ee}$ and $\lim_{n\to\infty}{\Gamma(n+1,x)-\Gamma(n+1,1)}=0$ for $x\geq0$. Combined with the expressions before we conclude that
\[
\sup_{x\in K}{\left|h(x)-\int_0^{\infty}{\ee^{-t}h_n(x+t)\ \dd{t}}\right|}\to0\quad (n\to\infty),
\]
for each compact $K\subseteq\RR$, which in fact shows that $h_n\stackrel{\tau_{-1}}{\rightarrow}g$. Furthermore, by construction one has that $\sup_{n\in\NN}{\left\|h_n\right\|}<\infty$ and hence we may conclude that $g\in X_{-1,+}$, showing that $B$ is in fact a positive operator. In order to apply Theorem \ref{prop:PosPertAux} we have to show that $\res(1,A_{-1})B$ is local and $\mathrm{r}\left(\res(1,A_{-1})B\right)<1$. Observe that by construction one has that
\[
\res(1,A_{-1})Bf=\res(1,A_{-1})\Phi(f)g=\Phi(f)\res(1,A_{-1})g=\Phi(f)h.
\]
By the regularity of $\mu$ we find for $\varepsilon>0$ arbitrary a compact set $K'\subseteq\RR$, such that $\left|\mu\right|(\RR\setminus K')<\varepsilon$. Hence for all $f\in\BC(\RR)$
\begin{align*}
\left|\res(1,A_{-1})Bf(x)\right|&=\left|\Phi(f)h(x)\right|\\
&=\left|\int_\RR{f(y)\ \dd\mu(y)}\cdot h(x)\right|\\
&\leq\int_{\RR}{\left|f(y)\right|\ \dd\left|\mu\right|(y)}\cdot\left|h(x)\right|\\
&=\left(\int_{K'}{\left|f(y)\right|\ \dd\left|\mu\right|(y)}+\int_{\RR\setminus K'}{\left|f(y)\right|\ \dd\left|\mu\right|(y)}\right)\cdot\left\|h\right\|\\
&\leq\left(\left|\mu\right|(K')\sup_{x\in K'}{\left|f(x)\right|}+\varepsilon\left\|f\right\|\right)\cdot\left\|h\right\|,
\end{align*}
showing that $\res(1,A_{-1})B$ is indeed local. On the other hand
\[
\left|\res(1,A_{-1})Bf(x)\right|=\left|\Phi(f)h(x)\right|\leq\left|\mu\right|(\RR)\left\|f\right\|\left\|h\right\|,
\]
showing that $\left\|\res(1,A_{-1})B\right\|<1$ if $\left|\mu\right|(\RR)<1$. Hence Theorem \ref{prop:PosPertAux} applies and we may conclude that $A_{-1}+B$ generates a positive semigroup.

\subsection{The left-implemented semigroup on $\LLL(E,F)$}
Let us consider bi-continuous semigroups on $\LLL(E,F)$, the space of bounded linear operator from a Banach space $E$ to another Banach space $F$. By Example \ref{exa:BiContSpaces}(b) the space $\LLL(E,F)$ equipped with the strong operator topology satisfies Assumption \ref{asp:bicontspace}. We will treat this space and topology in more detail. The so-called \emph{strong operator topology} $\tau_\mathrm{sot}$ on $\LLL(E,F)$ is generated by the family of seminorms $\semis=\left\{p_x:\ x\in E\right\}$, where $p_x(T):=\left\|Tx\right\|$ for $T\in\LLL(E,F)$. Notice that $\LLL(E,F)$ does not yield a Banach lattice structure and much less a $\mathrm{AM}$-space structure in general, even if $E$ and $F$ are Banach lattices. However, under certain circumstances this happens. For that, we recall the following notion, cf. \cite[Chapter IV, Def.~4.2]{Schaefer1974}.

\begin{definition}
A Banach lattice $(E,\left\|\cdot\right\|,\leq)$ is said to have \emph{property ($\mathrm{P}$)} if there exists a positive, contractive projection $E''\to E$, where $E$ (under evaluation) is identified with a vector sublattice of its bidual $E''$.
\end{definition}

In what follows, we also need the following concept.

\begin{definition}
A Banach lattice $(E,\left\|\cdot\right\|,\leq)$ is called \emph{$\mathrm{AL}$-space} if $\left\|x+y\right\|=\left\|x\right\|+\left\|y\right\|$ for all $x,y\geq0$.
\end{definition}

We remark, that every $\mathrm{KB}$-space, every $\mathrm{AL}$-space and every order complete $\mathrm{AM}$-space with unit has property ($\mathrm{P}$), cf. \cite[Chapter IV, Sect.~4]{Schaefer1974}. The following will be crucial for our set-up.

\begin{hypo*}
Let $(E,\left\|\cdot\right\|,\leq)$ and $(F,\left\|\cdot\right\|,\leq)$ be Banach lattices such that
\begin{abc}
	\item $F$ is an $\mathrm{AM}$-space and has property ($\mathrm{P}$),
	\item $E$ is an $\mathrm{AL}$-space.
\end{abc}
\end{hypo*}

By \cite[Chapter IV, Prop.~4.4]{Schaefer1974} we may conclude that under this hypothesis $\LLL(E,F)$ becomes an (order complete) $\mathrm{AM}$-space with unit. We claim that $\LLL(E,F)$ in this situation is even a bi-$\mathrm{AM}$-space. For that notice that in our situation, $\tau_{\mathrm{sot}}$ is generated by a smaller family of seminorms, namely the family $\semis_+:=\left\{p_x:\ x\in E_+\right\}$, since every element $x\in E$ can be decomposed as $x=x_+-x_-$ for $x_+,x_-\in E_+$. Let us call the locally convex topology generated by $\semis_+$ by $\tau_\mathrm{sot}^+$ and observe that $\tau_\mathrm{sot}$ and $\tau_\mathrm{sot}^+$ coincide. Equipped with $\tau^+_{\mathrm{sot}}$, the space $\LLL(E,F)$ also satisfies the general Assumptions \ref{asp:bicontspace} and becomes a bi-$\mathrm{AM}$-space due to the assumption that $F$ is an $\mathrm{AM}$-space.

\medskip
Let us now turn to operator semigroups. For a fixed and positive $C_0$-semigroup $(T(t))_{t\geq0}$ on $F$ with generator $(A,\dom(A))$, the so-called \emph{left-implemented semigroup} $(\mathcal{U}(t))_{t\geq0}$ on $\LLL(E,F)$ is defined by
\[
\mathcal{U}(t)S:=T(t)S,\quad t\geq0,\ S\in\LLL(E,F).
\]
This semigroup as well as its extrapolation spaces have been intensively studied by J.~Alber \cite{AlberDip,Alber2001} and more recently by B.~Farkas and the author \cite{BF,BF2,BuddePhD}. This semigroups is known to be bi-continuous with respect to the strong operator topology $\tau_{\mathrm{sot}}$. Furthermore, we observe that $(\mathcal{U}(t))_{t\geq0}$ becomes a positive semigroup since $(T(t))_{t\geq0}$ was assumed to be positive. Since $\tau_\mathrm{sot}$ is a complete locally convex topology if restricted to the closed unit ball of $\LLL(E,F)$, we conclude by \cite[Chapter I, Prop.~1.14]{C1987} that the space $\LLL(E,F)$ is $\gamma$-complete. Moreover, one has that for $(a_n)_{n\in\NN}\in\mathrm{c}_0$, $a_n\geq0$, and $(x_n)_{n\in\NN}\subseteq E$
\[
\sup_{t\in\left[0,t_0\right]}\widetilde{p}_{(a_n,p_{x_n})}(\mathcal{U}(t)S)=\sup_{t\in\left[0,t_0\right]}{\sup_{n\in\NN}a_n\left\|\mathcal{U}(t)Sx_n\right\|}\leq\sup_{t\in\left[0,t_0\right]}M\ee^{\omega t}\cdot{\sup_{n\in\NN}a_n\left\|Sx_n\right\|}\leq K\cdot\widetilde{p}_{(a_n,p_{x_n})}(S),
\]
showing that $(\mathcal{U}(t))_{t\geq0}$ is $\gamma$-locally equicontinuous. Hence, also Assumption \ref{ass:CompMixed} is satisfied. By \cite[Sect.~6.4]{BF}, we know that the generator $(\mathcal{G},\dom(\mathcal{G}))$ of $(\mathcal{U}(t))_{t\geq0}$ is given by $\mathcal{G}S=A_{-1}S$ on the domain $\dom(\mathcal{G})=\left\{S\in\LLL(E,F):\ A_{-1}S\in\LLL(E,F)\right\}$. Here, $A_{-1}$ denotes the extrapolated generator of $(T(t))_{t\geq0}$. Again by \cite[Sect.~6.4]{BF}, we know exactly, how the extrapolation space $X_{-1}$ of the left-implemented semigroup $(\mathcal{U}(t))_{t\geq0}$ looks like, i.e., one has that $X_{-1}=\LLL(E,F_{-1})$, where $F_{-1}$ is the first extrapolation space with respect to the $C_0$-semigroup $(T(t))_{t\geq0}$. 

\medskip
Now we are able to consider positive Desch--Schappacher perturbations. To do so, let $B:F\to F_{-1}$ be a positive operator and assume for simplicity that there exists $\lambda>s(A)$ with $\left\|R(\lambda,A_{-1})B\right\|<1$. Since $F$ is an $\mathrm{AM}$-space by assumption from above, we conclude by \cite[Thm.~1.2 \& Prop.~4.2]{BJVW2018} that $(A_{-1}+B)_{|F}$ generates a positive $C_0$-semigroup $(S(t))_{t\geq0}$ on $F$. Similar to \cite[Thm.~5.5]{BF2}, we define an operator $\mathcal{K}:\LLL(E,F)\to\LLL(E,F_{-1})$ by
\[
\mathcal{K}S:=BS,\quad S\in\LLL(E,F).
\]
We know by the same theorem, cf. \cite[Thm.~6.5]{BF2}, that $(\mathcal{G}_{-1}+\mathcal{K})_{|\LLL(E,F)}$ generates a bi-continuous semigroup on $\LLL(E,F)$ which is implemented by $(S(t))_{t\geq0}$ and hence is again positive due to the positivity of $(S(t))_{t\geq0}$. However, we want to show that this already follows by Theorem \ref{prop:PosPertAux} without making use of the results by Batkai et al. \cite[Thm.~1.2 \& Prop.~4.2]{BJVW2018}. For that purpose, we first have to show that $\mathcal{K}$ is a positive operator. To do so, let $S\in\LLL(E,F)$ be positive. To show that $\mathcal{K}$ is positive, choose $x\in E_+$ arbitrary and observe that $Sx\in F_+$ and that $B(F_+)\subseteq F_{-1,+}$ showing that $BSx\geq0$, hence $BS\geq0$ and therefore $\mathcal{K}$ is positive. Owing to the fact that one has
\[
R(\lambda,\mathcal{G}_{-1})\mathcal{K}Sx=R(\lambda,A_{-1})BSx,\quad S\in\LLL(E,F),\ x\in E,\ \lambda>s(A),
\]
it is obvious that the operator $R(\lambda,\mathcal{G}_{-1})\mathcal{K}$ is local and satisfies $\left\|R(\lambda,\mathcal{G}_{-1})\mathcal{K}\right\|<1$ since we also assumed that  $\left\|R(\lambda,A_{-1})B\right\|<1$. Hence, we can apply Theorem \ref{prop:PosPertAux} to conclude that $(\mathcal{G}_{-1}+\mathcal{K})_{|\LLL(E,F)}$ generates a positive bi-continuous semigroup $(\mathcal{V}(t))_{t\geq0}$ on $\LLL(E,F)$. That $(\mathcal{V}(t))_{t\geq0}$ is implemented, too, one has to consider \cite[Lemma~6.3]{BF2}.

\section*{Acknowledgement}
The author is indepted to B\'{a}lint Farkas and Sanne ter Horst for the fruitful discussions, the continuous support during writing and the helpful feedback. This study was funded by the DAAD-TKA Project 308019 ``\emph{Coupled systems and innovative time integrators}''. 

\end{document}